\documentclass[14pt,oneside]{extarticle}
\usepackage{setspace}
\usepackage[T1,T2A]{fontenc}
\usepackage[utf8]{inputenc}
\usepackage[english, russian]{babel}
\usepackage{titlesec}
\usepackage{titletoc}
\usepackage{fancyhdr}
\usepackage{indentfirst}
\usepackage{amsthm}
\usepackage{amssymb}
\usepackage{amsmath}
\usepackage{amsfonts}
\usepackage{verbatim}
\usepackage{cite}
\usepackage{enumitem}
\usepackage{ragged2e}
\usepackage[a4paper,headheight=17pt,left=20mm,right=20mm,bottom=20mm,top=20mm]{geometry}
\usepackage{stmaryrd}
\usepackage{tikz}
\usetikzlibrary{positioning}

\tikzset{sgplattice/.style={inner sep=1pt,norm/.style={red!50!blue},char/.style={blue!50!black},
  lin/.style={black!75}},cnj/.style={black!50,yshift=-2.5pt,left=-1pt of #1,scale=0.5,fill=white}}

\titleformat{\section}[block]{\bfseries}%
	{\bfseries\hspace{12.5mm} \thesection \ }{0pt}{}
\titleformat{\subsection}[block]{\normalfont\bfseries}%
	{\bfseries\hspace{12.5mm}\thesubsection\ }{0pt}{}

\newcommand{\sectionbreak}

\swapnumbers

\newtheoremstyle{theorem}
{0pt}
{0pt}
{\sl}
{\parindent}
{\bf}
{. }
{ }
{}

\theoremstyle{theorem}
\newtheorem{theorem}{Theorem}     
\newtheorem{lemma}{Lemma}          
\newtheorem{corollary}{Corollary}[theorem]   

\newtheoremstyle{definition}
{0pt}
{0pt}
{}
{\parindent}
{\bf}
{. }
{ }
{}
\theoremstyle{definition}
\newtheorem{definition}{Definition}  
\newtheorem{example}{Example}      

\renewenvironment{proof}[1][Proof]{{\bf #1. }}{\qed}

\usepackage{xcolor}

\usepackage{tikz}
\usetikzlibrary{positioning}

\tikzset{sgplattice/.style={inner sep=1pt,norm/.style={red!50!blue},char/.style={blue!50!black},
  lin/.style={black!50}},cnj/.style={black!50,yshift=-2.5pt,left=-1pt of #1,scale=0.5,fill=white}}

\makeatletter
\def\thmhead@plain#1#2#3{%
  \thmname{#1}\thmnumber{\@ifnotempty{#1}{ }\@upn{#2}}%
  \thmnote{ {\the\thm@notefont#3}}}
\let\thmhead\thmhead@plain

\def\swappedhead#1#2#3{%
  \thmname{#1}~\thmnumber{#2}
  \thmnote{ {\the\thm@notefont#3}}}
\let\swappedhead@plain=\swappedhead
\makeatother

\makeatletter
\renewcommand*\l@section{\@dottedtocline{0}{0em}{1.4em}}
\makeatother

\makeatletter
\renewcommand\@biblabel[1]{#1}
\makeatother

\makeatletter 
\renewcommand{\@tocrmarg}{2.55em plus1fil}
\makeatother 

\setenumerate{noitemsep,topsep=0pt,parsep=0pt,partopsep=0pt,leftmargin=0pt,itemindent=18.5mm}

\begin{document}

\abovedisplayskip=8pt plus 2pt minus 2pt
\belowdisplayskip=8pt plus 2pt minus 2pt

\begin{center}
  \Large{\textbf{On the intersection of $\mathfrak{F}$-maximal subgroups\\ of a finite group}}\footnote{The work was carried out with the financial support of the Belarusian Republican Foundation for Fundamental Research (BRFFR-RSF M, project F23RNFM-63).}\normalsize
  
  \hspace{5mm}
  
  Viachaslau I. Murashka, Yana A. Kuptsova
  
  \hspace{5mm}
  
  \small Faculty of Mathematics and Technologies of Programming,\\ Research Laboratory <<Mathematics of Hybrid Intelligence Systems>>,\\ Francisk Skorina Gomel State University, Gomel,  Belarus
  
  \small mvimath@yandex.by, kuptsovayana519@gmail.com
\end{center}

\textbf{Abstract:} We investigate the properties of the intersection $\mathrm{Int}_{\mathfrak{F}}(G)$ of all $\mathfrak{F}$-maximal subgroups of a finite group $G$ for a hereditary formation $\mathfrak{F}$ of finite groups. We prove that $\mathrm{Int}_{\mathfrak{F}}(G/\mathrm{Int}_{\mathfrak{F}}(G))\simeq 1$ holds for any finite group $G$ if and only if $\mathfrak{F}$ contains every group $G$ all of whose $\mathfrak{F}$-subgroups are $\mathfrak{F}$-subnormal. As corollaries we obtain the results of A.~N.~Skiba (2011), J.~C.~Beidleman and H.~Heineken (2011) about $\mathrm{Int}_{\mathfrak{F}}(G)$ for a hereditary saturated formation $\mathfrak{F}$.

\textbf{Keywords:} finite group, formation, $\mathfrak{F}$-subormal subgroup, weak $\mathfrak{F}$-subnormalizer, intersection of $\mathfrak{F}$-maximal subgroups, $\mathfrak{F}$-hypercenter. 

\section*{Introduction and the main results}

All groups considered here are finite. The concept of the hypercenter as the final term of the upper central series plays an important role in the modern group theory. The development of this concept within the framework of the theory of formations was carried out in the works of R.~Baer \cite{Baer1959} for the class of all supersolvable groups, B.~Huppert \cite{B.Huppert1969} for local formations, L.~A.~Shemetkov for graduated formations \cite{Shemetkov1974} and appeared in its final form in the monograph of L.~A.~Shemetkov and A.~N.~Skiba \cite{ShemetkovSkiba1989}. Let $\mathfrak{F}$ be a formation. Recall (see \cite[p. 6]{Guo2015book} or \cite{ShemetkovSkiba1989}) that a chief factor $H/K$ of a group $G$ is called $\mathfrak{F}$-central provided that the semidirect product of $H/K$ with $G/C_G(H/K)$ belongs to $\mathfrak{F}$, where $G/C_G(H/K)$ acts on $H/K$ by conjugation. The largest normal subgroup of $G$ such that all chief factors of $G$ below it are $\mathfrak{F}$-central in $G$ is called the $\mathfrak{F}$-hypercenter (see \cite{Guo2015book} or \cite[p.142]{ShemetkovSkiba1989}) and is denoted by $\mathrm{Z}_{\mathfrak{F}}(G)$.

Before the mentioned works R.~Baer in 1966 \cite{Baer1966} proposed a different approach to define the $\mathfrak{e}$-hypercenter for a not necessary finite group and a group-theoretical property $\mathfrak{e}$. The $\mathfrak{e}$-hypercenter $\mathfrak{h}_\mathfrak{e}G$ of $G$ is the set of elements $g$ of $G$ such that $\langle g,E\rangle$ is a subgroup of some $\mathfrak{e}$-subgroup of $G$ for any $\mathfrak{e}$-subgroup $E$ of $G$ \cite[$\S 4$]{Baer1966}. From \cite{Baer1953} it follows that if $\mathfrak{e}$ is nilpotency, then $\mathfrak{h}_\mathfrak{e}G$ is the hypercenter $G$. Note that if $\mathfrak{e}$ is the property of a group to belong to a class $\mathfrak{F}$, then $\mathfrak{h}_\mathfrak{e}G$ is the intersection $\mathrm{Int}_{\mathfrak{F}}(G)$ of all $\mathfrak{F}$-maximal subgroups of $G$. Various properties of this subgroup were studied by A.~N.~Skiba \cite{Skiba2011, Skiba2012}, W.~Guo and A.~N.~Skiba \cite{WenbinSkiba2012}, J.~С.~Beidleman and H.~Heineken \cite{Beidleman2010}, A.~Lucchini and D.~Nemmi \cite{Lucchini2021}, V.~I.~Murashka and A.~F.~Vasil'ev \cite{Murashka2022}. One of the key properties of the $\mathfrak{F}$-hypercenter, in particular the hypercenter, is: $\mathrm{Z}_{\mathfrak{F}}(G/\mathrm{Z}_{\mathfrak{F}}(G))\simeq 1$ for any group $G$. An analogue of this property for $\mathrm{Int}_{\mathfrak{F}}(G)$ was obtained in \cite{Skiba2011, Skiba2012} when $\mathfrak{F}$ is a hereditary saturated formation. 

\begin{theorem}\label{theorem1}
There exists a hereditary solubly saturated formation $\mathfrak{F}$ and a group $G$ such that $\mathrm{Int}_{\mathfrak{F}}(G/\mathrm{Int}_{\mathfrak{F}}(G))\not \simeq 1$.
\end{theorem}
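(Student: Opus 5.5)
The plan is to use the class $\mathfrak{N}^*$ of all quasinilpotent groups, i.e.\ groups $G$ with $F^*(G)=G$. It is a standard example of a hereditary formation that is solubly saturated but not saturated. The key point is that $\mathfrak{N}^*$ fails the criterion announced in the abstract: it does not contain every group all of whose $\mathfrak{N}^*$-subgroups are $\mathfrak{N}^*$-subnormal. I will exhibit an explicit group $G$ in which the conclusion fails. The order of steps is as follows.

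First, I record the basic facts needed about $\mathfrak{N}^*$. It is closed under subgroups, quotients and subdirect products. It is solubly saturated: $G/\Phi(R)\in\mathfrak{N}^*$ for the soluble radical $R$ implies $G\in\mathfrak{N}^*$. Moreover, $E\in\mathfrak{N}^*$ and $Z\le Z(G)$ imply $EZ\in\mathfrak{N}^*$, since a central product of a quasinilpotent group with an abelian group is quasinilpotent. The last fact gives $Z(G)\le \mathrm{Int}_{\mathfrak{N}^*}(G)$, and more generally puts central subgroups into the intersection. This is the mechanism that makes $\mathrm{Int}_{\mathfrak{N}^*}(G)$ nontrivial while $G/\mathrm{Int}_{\mathfrak{N}^*}(G)$ is not $\mathfrak{N}^*$-hypercentral.

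Second, I construct $G$ as a non-split central-type extension tied to a nonabelian simple group. The candidate is built from $SL(2,5)$ (or $2.A_5$) together with a soluble piece acting on it. The requirements are:
\begin{itemize}
\item $Z=\mathrm{Int}_{\mathfrak{N}^*}(G)$ is a small normal subgroup, for instance central of order $2$;
\item in $\bar G=G/Z$ some nontrivial normal subgroup $\bar N$, for instance a normal $A_5$ or a minimal normal $p$-subgroup, lies in every $\mathfrak{N}^*$-maximal subgroup of $\bar G$.
\end{itemize}
The intended reason for the second requirement is that quasinilpotent subgroups of $\bar G$ which fail to contain $\bar N$ lift to subgroups of $G$ whose preimages are quasinilpotent only because of the central $Z$. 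This is exactly where the lack of saturation enters: the lift of a non-quasinilpotent configuration becomes quasinilpotent modulo a Frattini-type central subgroup inside a nonsoluble component, and soluble saturation does not forbid this.

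Third, I compute both intersections. For $\mathrm{Int}_{\mathfrak{N}^*}(G)$ I list the $\mathfrak{N}^*$-maximal subgroups of $G$: those containing the quasinilpotent normal component, and the nilpotent subgroups such as Sylow-type normalizer pieces. I then check that their intersection is exactly $Z$. For $\bar G$ I do the same and show that the intersection contains $\bar N\neq 1$.

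The main obstacle is the choice of $G$, and in particular verifying that every $\mathfrak{N}^*$-maximal subgroup of $G/Z$ contains $\bar N$ while the corresponding statement fails in $G$ itself. If the $SL(2,5)$-based candidate does not work, I would next try $G=V\rtimes SL(2,5)$ with $V$ a faithful irreducible $\mathbb{F}_p$-module. There I would compute the $\mathfrak{N}^*$-maximal subgroups directly as the subgroups $C_V(E)E$, with $E$ quasinilpotent in $SL(2,5)$, together with $V$-containing nilpotent subgroups.
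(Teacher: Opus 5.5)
Your proposal has a genuine gap at the first step: the class $\mathfrak{N}^*$ of quasinilpotent groups is \emph{not} hereditary. Every nonabelian simple group is quasinilpotent, but for soluble groups $F^*=F$. Hence $A_4\le A_5$ (or $SL(2,3)\le SL(2,5)$) is a non-quasinilpotent subgroup of a quasinilpotent group; $\mathfrak{N}^*$ is closed under normal subgroups but not under subgroups. The statement asks for a hereditary solubly saturated formation, so no choice of $G$ can rescue this choice of $\mathfrak{F}$. For the same reason you cannot appeal to the criterion of Theorem~2 for $\mathfrak{N}^*$, since that theorem is proved only for hereditary formations. The paper runs into the same issue with the quasisoluble formation $\mathfrak{F}_2$, which is not subgroup-closed either. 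It therefore works with $\mathfrak{F}=\mathfrak{F}_1\cap\mathfrak{F}_2$, where $\mathfrak{F}_1$ consists of the groups with composition factors in $\{Z_2,Z_3,Z_5,A_5\}$. Proving heredity of this $\mathfrak{F}$ is a real step: it uses that $G/\mathrm{Z}_{\mathfrak{S}}(G)$ is a direct product of copies of $A_5$ for $G\in\mathfrak{F}$.

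Independently of this, no group is actually constructed: your bullet points restate the desired conclusion, and the intersections are never computed. Worse, the mechanism you rely on, a central $\mathrm{Int}_{\mathfrak{F}}(G)$, fails for every hereditary solubly saturated $\mathfrak{F}$. Suppose $Z\le Z(X)$ has prime order $p$, $Z_p\in\mathfrak{F}$ and $X/Z\in\mathfrak{F}$. Then $X$ embeds in the central product $Y=Z_{p^2}\circ X$, amalgamating $Z$ with $\Phi(Z_{p^2})$. The image $C$ of $Z_{p^2}$ is central, so $\Phi(C)\le\Phi(Y_{\mathfrak{S}})$, and $Y/\Phi(C)\simeq Z_p\times X/Z\in\mathfrak{F}$; hence $Y\in\mathfrak{F}$ and so $X\in\mathfrak{F}$. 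Now let $I=\mathrm{Int}_{\mathfrak{F}}(G)$ be central, $J/I=\mathrm{Int}_{\mathfrak{F}}(G/I)$, and $M$ be $\mathfrak{F}$-maximal in $G$. Then $MJ/I\in\mathfrak{F}$, because $M/I$ lies in an $\mathfrak{F}$-maximal subgroup of $G/I$, which contains $J/I$. Also $Z_p\in\mathfrak{F}$ for every $p\in\pi(I)$, since $I\le M$. Applying the above down a composition series of $I$ gives $MJ\in\mathfrak{F}$, so $J\le M$ and $J=I$. In particular $SL(2,5)\in\mathfrak{F}$ whenever $A_5\in\mathfrak{F}$.

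What you need is a Frattini kernel that soluble saturation does not absorb. The paper takes a Frattini extension $R$ of $A_5$ whose kernel $A=\Phi(R)$ is a faithful Frattini $\mathbb{F}_2A_5$-module. Every maximal subgroup of $R$ contains $A$ and is a soluble $\{2,3,5\}$-group, so it lies in $\mathfrak{F}$. Yet $R\notin\mathfrak{F}$, because some chief factor of $R$ inside $A$ has nonsoluble automizer, which cannot act by inner automorphisms. Soluble saturation is no obstacle here, since $\Phi(R_{\mathfrak{S}})=\Phi(A)=1$. Hence $\mathrm{Int}_{\mathfrak{F}}(R)=A$, while $R/A\simeq A_5\in\mathfrak{F}$ gives $\mathrm{Int}_{\mathfrak{F}}(R/A)=R/A\neq 1$.
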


Therefore the problem to describe all hereditary formations $\mathfrak{F}$ such that $\mathrm{Int}_{\mathfrak{F}}(G/\mathrm{Int}_{\mathfrak{F}}(G))\simeq 1$ holds for any group $G$ seems natural. Our solution to this problem is closely related to the concept of an $\mathfrak{F}$-subnormal subgroup. Recall (see \cite[Definition 6.1.2]{BallesterBolinches2006}) that a subgroup $H$ of $G$ is called $\mathfrak{F}$-subnormal in $G$ (shortly $H$ $\mathfrak{F}$-sn $G$) if either $H=G$ or there exists a maximal chain of subgroups $H=H_0\subset H_1 \subset \ldots \subset H_n=G$ with $H_{i}/Core_{H_i}(H_{i-1})\in \mathfrak{F}$~for~any~$i=1,~\ldots,~n$.

\begin{theorem}\label{theorem2} Let $\mathfrak{F}$ be a non-empty hereditary formation. Then $\mathrm{Int}_{\mathfrak{F}}(G/\mathrm{Int}_{\mathfrak{F}}(G))\simeq 1$ holds for every group $G$ if and only if $\mathfrak{F}$ contains every group $G$ all of whose $\mathfrak{F}$-subgroups are $\mathfrak{F}$-subnormal in $G$. \end{theorem}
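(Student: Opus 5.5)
We will prove the two implications separately. The main tool is a description of $\mathrm{Int}_{\mathfrak{F}}(G)$ through $\mathfrak{F}$-subnormality. For a hereditary formation $\mathfrak{F}$ we aim to show:
\[
\mathrm{Int}_{\mathfrak{F}}(G)=\{x\in G \mid \langle x\rangle H \in\mathfrak{F} \text{ for every } \mathfrak{F}\text{-subgroup } H \}.
\]
This is Baer's $\mathfrak{h}_{\mathfrak{e}}G$. Writing $I=\mathrm{Int}_{\mathfrak{F}}(G)$, we also need two facts:
\begin{itemize}
\item for every $\mathfrak{F}$-subgroup $H$ we have $HI\in\mathfrak{F}$, because $HI$ lies in any $\mathfrak{F}$-maximal subgroup containing $H$;
\item $I$ is characteristic, hence normal, in $G$.
\end{itemize}

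\emph{Sufficiency.} Assume $\mathfrak{F}$ contains every group all of whose $\mathfrak{F}$-subgroups are $\mathfrak{F}$-subnormal. Put $\bar G=G/I$ and let $J/I=\mathrm{Int}_{\mathfrak{F}}(\bar G)$. We want $J=I$, and it suffices to show $JH\in\mathfrak{F}$ for every $\mathfrak{F}$-subgroup $H$ of $G$. Then $J$ lies in every $\mathfrak{F}$-maximal subgroup, which forces $J\le I$.

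Fix such an $H$. Then $HI\in\mathfrak{F}$, so $HI/I$ is an $\mathfrak{F}$-subgroup of $\bar G$, and $JH/I$ lies in an $\mathfrak{F}$-subgroup of $\bar G$, hence $JH/I\in\mathfrak{F}$ by heredity. The task reduces to lifting: if $I\le X$, $X/I\in\mathfrak{F}$, and $KI\in\mathfrak{F}$ for all $\mathfrak{F}$-subgroups $K$, we must show $X\in\mathfrak{F}$.

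To do this we take $X=JH$ and show that every $\mathfrak{F}$-subgroup $K$ of $X$ is $\mathfrak{F}$-subnormal in $X$; the hypothesis on $\mathfrak{F}$ then gives $X\in\mathfrak{F}$. Since $KI\in\mathfrak{F}$ and $KI\ge I$, it suffices to do two things:
\begin{itemize}
\item Show that $K$ is $\mathfrak{F}$-subnormal in $KI$. Every subgroup of an $\mathfrak{F}$-group is $\mathfrak{F}$-subnormal, since along any maximal chain the quotients $H_i/\mathrm{Core}_{H_i}(H_{i-1})$ are epimorphic images of subgroups of an $\mathfrak{F}$-group.
\item Show that $KI$ is $\mathfrak{F}$-subnormal in $X$. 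Subgroups containing $I$ correspond to subgroups of $X/I\in\mathfrak{F}$, and the cores contain $I$, so the chain quotients lie in $\mathfrak{F}$.
\end{itemize}
Transitivity of $\mathfrak{F}$-subnormality along maximal chains then concludes this direction. Transitivity needs care, but it follows by concatenating the chains, since the definition is purely chain-based.

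\emph{Necessity.} Suppose some group $G\notin\mathfrak{F}$ has all its $\mathfrak{F}$-subgroups $\mathfrak{F}$-subnormal, and choose $G$ of minimal order. We construct a group $W$ with $\mathrm{Int}_{\mathfrak{F}}(W/\mathrm{Int}_{\mathfrak{F}}(W))\neq 1$. The natural candidate is a regular wreath product $W=N\wr G$, or $W=V\rtimes G$ with $V$ a faithful module in a suitable characteristic. The aim is to arrange two properties:
\begin{itemize}
\item $\mathrm{Int}_{\mathfrak{F}}(W)=N^{\natural}$, the base group, which should hold because every $\mathfrak{F}$-subgroup of $G$ is $\mathfrak{F}$-subnormal;
\item $W/N^{\natural}\simeq G$ has $\mathrm{Int}_{\mathfrak{F}}(G)\ne1$.
\end{itemize}

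By minimality, every proper subgroup of $G$ lies in $\mathfrak{F}$, so $\mathfrak{F}$-maximal subgroups of $G$ are maximal subgroups. That alone need not make $\mathrm{Int}_{\mathfrak{F}}(G)$ nontrivial, so we iterate: the counterexample should come from $\mathrm{Int}_{\mathfrak{F}}(W)$ being strictly smaller than a subgroup whose image is still $\mathfrak{F}$-hypercentral-like in the quotient.

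\emph{Main obstacle.} I expect the necessity construction to be the hard part: choosing $W$ so that $\mathrm{Int}_{\mathfrak{F}}$ of the quotient is provably nontrivial, using the $\mathfrak{F}$-subnormality of all $\mathfrak{F}$-subgroups of $G$ to control which subgroups are $\mathfrak{F}$-maximal. The first attempt would be $W=C_p\wr G$ with $p$ such that $C_p\in\mathfrak{F}$. We would check that $\mathrm{Int}_{\mathfrak{F}}(W)$ is the base group, and then that in $W/\mathrm{Int}_{\mathfrak{F}}(W)\cong G$ the $\mathfrak{F}$-subnormality forces a nontrivial common part of all $\mathfrak{F}$-maximal subgroups. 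For this last step we would use weak $\mathfrak{F}$-subnormalizers, which the paper lists as a keyword.
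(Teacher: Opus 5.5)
\textbf{Sufficiency.} This direction is correct and is a streamlined version of the paper's route. The paper introduces $\mathcal{I}_{\mathfrak{F}}^{\mathfrak{F}}(G)$ and combines Theorem~\ref{thm1}(3) with Theorem~\ref{thm2}. You instead prove $JH\in\mathfrak{F}$ directly, using the key step from the proof of Theorem~\ref{thm2}: an $\mathfrak{F}$-subgroup $K$ is $\mathfrak{F}$-subnormal in $KI\in\mathfrak{F}$, and $KI$ is $\mathfrak{F}$-subnormal in $X$ because $X/I\in\mathfrak{F}$. Your approach is shorter and self-contained. The paper's machinery buys the general statements for $\mathfrak{H}$-subgroups that it reuses later. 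The element-wise description of $\mathrm{Int}_{\mathfrak{F}}(G)$ is never used, and $\langle x\rangle H$ should be $\langle x,H\rangle$, so you can drop it.

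\textbf{Necessity.} This direction, however, contains no proof. You propose $W=C_p\wr G$ or $V\rtimes G$ but verify neither $\mathrm{Int}_{\mathfrak{F}}(W)=N^{\natural}$ nor the nontriviality in the quotient. The closing appeal to weak $\mathfrak{F}$-subnormalizers is a hope, not an argument.

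No auxiliary group is needed: the minimal counterexample $G$ itself is the witness, and you stopped one step short. You correctly observe that every maximal subgroup $M$ of $G$ lies in $\mathfrak{F}$ (by Lemma~\ref{lem11} and minimality). Hence the $\mathfrak{F}$-maximal subgroups of $G$ are exactly its maximal subgroups and $\mathrm{Int}_{\mathfrak{F}}(G)=\Phi(G)$.

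The missing idea is that such $M$ is also $\mathfrak{F}$-subnormal in $G$. Since $M$ is maximal, the only maximal chain is $M\subset G$, so $G/\mathrm{Core}_G(M)\in\mathfrak{F}$, that is, $G^{\mathfrak{F}}\le M$. Therefore $G^{\mathfrak{F}}\le\Phi(G)$, which gives $G/\Phi(G)\in\mathfrak{F}$. Moreover $\Phi(G)\ne G$ because $G\ne 1$. So
\[
\mathrm{Int}_{\mathfrak{F}}(G/\mathrm{Int}_{\mathfrak{F}}(G))=G/\Phi(G)\not\simeq 1,
\]
contradicting the hypothesis.

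Your remark that the maximality observation alone need not make $\mathrm{Int}_{\mathfrak{F}}(G)$ nontrivial overlooks this: in fact $\Phi(G)\ge G^{\mathfrak{F}}\ne 1$. What actually matters, though, is that the quotient $G/\Phi(G)$ is a nontrivial $\mathfrak{F}$-group. The paper reaches the same contradiction via a case split on whether $\Phi(G)=1$ and whether $G/\Phi(G)\in\mathfrak{F}$.
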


Recall that a group is called nilpotent if it coincides with its hypercenter. According to the results of D.~W.~Barnes and O.~H.~Kegel \cite[Chapter IV, Proposition 1.5]{Doerk1992} if $\mathfrak{F}$ is a formation and $G\in \mathfrak{F}$, then $G$ coincides with its $\mathfrak{F}$-hypercenter. The class of all abelian groups shows that in the general case the converse of the previous statment fails. L.~A.~Shemetkov \cite[Question 4.1]{Shemetkov1997} in 1997 posed the problem to describe a family of formations $\mathfrak{F}=(G\mid \mathrm{Z}_\mathfrak{F}(G)=G)$. Such formations are called $Z$-saturated \cite{Murashka2022a}.

\begin{theorem}\label{theorem3} Let $\mathfrak{F}$ be a hereditary formation of soluble groups. Then $\mathfrak{F}$ contains every group $G$ all of whose $\mathfrak{F}$-subgroups are $\mathfrak{F}$-subnormal in $G$ if and only if $\mathfrak{F}$ is a $Z$-saturated formation.\end{theorem}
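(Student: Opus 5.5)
We prove the two implications separately. Throughout, $\mathfrak{F}$ is a hereditary formation of soluble groups. We use the Barnes--Kegel result quoted above: $G\in\mathfrak{F}$ implies $\mathrm{Z}_{\mathfrak{F}}(G)=G$. So $Z$-saturation of $\mathfrak{F}$ means exactly that every group $G$ with $\mathrm{Z}_{\mathfrak{F}}(G)=G$ lies in $\mathfrak{F}$.

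\emph{First implication.} Assume $\mathfrak{F}$ contains every group all of whose $\mathfrak{F}$-subgroups are $\mathfrak{F}$-subnormal, and let $\mathrm{Z}_{\mathfrak{F}}(G)=G$. We must show $G\in\mathfrak{F}$, so it suffices to show that every $\mathfrak{F}$-subgroup $H$ of $G$ is $\mathfrak{F}$-subnormal. Take any maximal chain $H=H_0\subset H_1\subset\dots\subset H_n=G$. The hypercentre property passes to subgroups: for hereditary $\mathfrak{F}$ one has $\mathrm{Z}_{\mathfrak{F}}(G)\cap H_i\le \mathrm{Z}_{\mathfrak{F}}(H_i)$, which is standard at least for soluble $\mathfrak{F}$, where chief factors are abelian. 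Hence $\mathrm{Z}_{\mathfrak{F}}(H_i)=H_i$ for every $i$, and it is enough to show the following: if $M$ is a maximal subgroup of $X$ and $\mathrm{Z}_{\mathfrak{F}}(X)=X$, then $X/\mathrm{Core}_X(M)\in\mathfrak{F}$.

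To see this, note that $X$ is soluble, since all its chief factors are $\mathfrak{F}$-central and $\mathfrak{F}$ consists of soluble groups. So $X/\mathrm{Core}_X(M)$ is primitive soluble, of the form $N\rtimes M/\mathrm{Core}_X(M)$ with $N$ the unique minimal normal subgroup, and $N$ is a chief factor of $X$. Since $N=C_{X/\mathrm{Core}}(N)$, this quotient is isomorphic to $N\rtimes (X/C_X(N))$. That group lies in $\mathfrak{F}$ because $N$ is $\mathfrak{F}$-central. Thus every maximal chain witnesses $\mathfrak{F}$-subnormality, and $G\in\mathfrak{F}$.

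\emph{Second implication.} Assume $\mathfrak{F}$ is $Z$-saturated, and let $G$ be a group all of whose $\mathfrak{F}$-subgroups are $\mathfrak{F}$-subnormal. We show $G\in\mathfrak{F}$ by induction on $|G|$.

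\emph{Reduction to a unique minimal normal subgroup.} Every subgroup of $G$ inherits the hypothesis, because $\mathfrak{F}$-subnormality in $G$ of a subgroup of $K$ gives $\mathfrak{F}$-subnormality in $K$ for hereditary $\mathfrak{F}$, by the standard property (BallesterBolinches 6.1.7). By induction every proper subgroup is then in $\mathfrak{F}$. Quotients also inherit the hypothesis, as far as needed: an $\mathfrak{F}$-subgroup $HN/N$ has an $\mathfrak{F}$-supplement-type preimage. This step is delicate and is one place to be careful; alternatively we avoid quotients. Since $G$ is minimal non-$\mathfrak{F}$ and $\mathfrak{F}$ is a formation, either $G$ has a unique minimal normal subgroup, or we argue directly.

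\emph{Main step.} We aim to show $\mathrm{Z}_{\mathfrak{F}}(G)=G$, i.e.\ that every chief factor $H/K$ is $\mathfrak{F}$-central. If $G$ is not soluble, it is a minimal non-soluble group all of whose proper subgroups lie in $\mathfrak{F}$. Then a cyclic subgroup of prime order, or a Sylow $2$-subgroup, is an $\mathfrak{F}$-subgroup whose chain must pass through a maximal subgroup $M$ with $G/\mathrm{Core}_G(M)$ non-soluble, hence not in $\mathfrak{F}$. This is a contradiction, so $G$ is soluble. For soluble $G$, choose a chief factor $H/K$ and a maximal subgroup $M$ complementing it when it is Frattini-free. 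Take an $\mathfrak{F}$-subgroup inside $M$ (for example $M$ itself, which is proper and so lies in $\mathfrak{F}$). Its $\mathfrak{F}$-subnormality forces $M\supseteq$ a chain ending at a maximal subgroup $M'$ with $G/\mathrm{Core}_G(M')\in\mathfrak{F}$.

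The main obstacle is arranging that $M'$ relates to the given chief factor. The first approach to try: since $M$ is itself maximal, the chain $M\subset G$ is the only maximal chain, so $G/\mathrm{Core}_G(M)\cong H/K\rtimes G/C_G(H/K)\in\mathfrak{F}$. This makes Frattini chief factors central-like. For a chief factor $H/K$ lying in the Frattini subgroup, we use that $G/C_G(H/K)$ coincides with $G/C_G(H'/K')$ for some complemented factor of the same characteristic, by the standard Gaschütz argument in $G/\Phi(G)$. Alternatively, we pass to $G/K$ with $H/K\le\Phi(G/K)$ and use the hereditary closure to conclude that $H/K\rtimes G/C_G(H/K)\in\mathfrak{F}$ via a subgroup of a complemented one. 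Then $\mathrm{Z}_{\mathfrak{F}}(G)=G$, and $Z$-saturation gives $G\in\mathfrak{F}$.
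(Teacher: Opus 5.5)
Your first implication is correct and takes a different route from the paper. The paper (via its lemma on $f_{\mathfrak{H}}(\mathfrak{F})$) takes a group of minimal order in $Z\mathfrak{F}\setminus\mathfrak{F}$, reduces to a unique abelian minimal normal subgroup $N\le\Phi(G)$, and lifts $\mathfrak{F}$-subnormality from $G/N$. You instead show directly that $X/\mathrm{Core}_X(M)\in\mathfrak{F}$ for every maximal subgroup $M$ of a group $X$ with $\mathrm{Z}_{\mathfrak{F}}(X)=X$, since the primitive quotient is $N\rtimes X/C_X(N)$ for an $\mathfrak{F}$-central chief factor $N$. Heredity of $Z\mathfrak{F}$ (Lemma~\ref{coroll2}) then makes every subgroup of a $Z\mathfrak{F}$-group $\mathfrak{F}$-subnormal. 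This is clean and avoids a minimal counterexample.

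The gap is in the second implication, at chief factors $H/K$ with $H/K\le\Phi(G/K)$. Your complemented case is fine, but the reduction you invoke is false: the centralizer of a Frattini chief factor need not coincide with that of any complemented chief factor of the same characteristic. In $\mathrm{SL}_2(3)$ the only Frattini chief factor is the centre $Z$ of order $2$, centralized by the whole group. The only other $2$-chief factor, $Q_8/Z$, is complemented (by a cyclic subgroup of order $6$) and has centralizer $Q_8$. Even equal centralizers would not suffice. For a non-local formation, such as the rank-bounded formations of the Example, whether $V\rtimes X\in\mathfrak{F}$ depends on the module $V$ and not only on $X$. Your ``alternative'' through a subgroup of a complemented factor is not an argument.

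The missing idea is the one the paper uses. Put $\bar G=G/K$; it is soluble and all its proper subgroups lie in $\mathfrak{F}$, which your induction already gives. Since $\Phi(\bar G)<\mathrm{F}(\bar G)\le C_{\bar G}(H/K)$, there is a maximal subgroup $\bar M$ of $\bar G$ with $\bar G=\bar M\,C_{\bar G}(H/K)$. Then $H/K\le\Phi(\bar G)\le\bar M$, and every $\bar M$-invariant subgroup of $H/K$ is $\bar G$-invariant. So $H/K$ is a minimal normal subgroup of $\bar M\in\mathfrak{F}$ with $\bar M/C_{\bar M}(H/K)\cong\bar G/C_{\bar G}(H/K)$, and the Barnes--Kegel theorem yields $H/K\rtimes G/C_G(H/K)\in\mathfrak{F}$.

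With this step your plan (all chief factors $\mathfrak{F}$-central, then $Z$-saturation) goes through, and the hedged passage to quotients becomes unnecessary. The paper places the same step inside a minimal counterexample. There all subgroups are $\mathfrak{F}$-subnormal, so the hypothesis passes to quotients, and one reduces to a unique minimal normal subgroup $N=G^{\mathfrak{F}}\le\Phi(G)$ before applying it.
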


\begin{corollary}\label{coroll1} Let $\mathfrak{F}$ be a non-empty hereditary formation of soluble groups. Then $\mathrm{Int}_{\mathfrak{F}}(G/\mathrm{Int}_{\mathfrak{F}}(G))\simeq 1$ holds for every group $G$ if and only if $\mathfrak{F}$ is $Z$-saturated. \end{corollary}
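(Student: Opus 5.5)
This is a direct combination of Theorem~\ref{theorem2} and Theorem~\ref{theorem3}, so the plan is simply to chain the two equivalences.

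Let $\mathfrak{F}$ be a non-empty hereditary formation of soluble groups. We first apply Theorem~\ref{theorem2}, which needs only that $\mathfrak{F}$ is non-empty and hereditary. It says that $\mathrm{Int}_{\mathfrak{F}}(G/\mathrm{Int}_{\mathfrak{F}}(G))\simeq 1$ holds for every group $G$ if and only if the following condition holds:
\[
(\ast)\quad \mathfrak{F} \text{ contains every group all of whose } \mathfrak{F}\text{-subgroups are } \mathfrak{F}\text{-subnormal in it.}
\]
Next we apply Theorem~\ref{theorem3}. Its hypotheses are that $\mathfrak{F}$ is a hereditary formation of soluble groups, which we have. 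It says that $(\ast)$ holds if and only if $\mathfrak{F}$ is $Z$-saturated. Composing the two equivalences gives the corollary.

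There is no real obstacle, since both theorems are assumed available. The only thing to check is that the hypotheses match. Non-emptiness is needed for Theorem~\ref{theorem2} and is assumed in the corollary. Solubility of the groups in $\mathfrak{F}$ is needed for Theorem~\ref{theorem3} and is also assumed. Condition $(\ast)$ quantifies over all finite groups in both theorems, not only soluble ones, so the intermediate condition is literally the same statement and the two equivalences chain without change.
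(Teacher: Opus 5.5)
Your proposal is correct and matches the paper's route: the paper gives no separate proof, placing the corollary right after Theorem~\ref{theorem3} as the combination of Theorems~\ref{theorem2} and~\ref{theorem3} through their common middle condition. You also correctly checked that the hypotheses line up: non-emptiness is needed for Theorem~\ref{theorem2}, solubility for Theorem~\ref{theorem3}, and the intermediate condition quantifies over all groups in both.
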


\begin{example}
Recall that the rank of a chief factor $H/K=H_1/K\times \ldots \times H_k/K$ in a group $G$ is the number $k$ of isomorphic simple groups $H_i/K$. The class of all soluble groups whose chief factor ranks do not exceed $n$ is a hereditary non-saturated $Z$-saturated formation for $n\geq2$ \cite[Theorem 6]{Murashka2022a}.
\end{example} 

Classes of groups defined by systems of $\mathfrak{F}$-subnormal subgroups are widely studied nowadays (for example, see \cite{Murashka2020, M2014vF, VasAFTI2011, VasVegera2016, Semenchuk2011, MonakhovSokhor2018, GuoSkiba2019Chi, Kamornikov2022}). In \cite{M2014vF} and \cite{VasVegera2016} the classes $v_{\pi}\mathfrak{F}$ and $w_{\pi}\mathfrak{F}$ of groups, whose all cyclic primary and primary $\pi$-subgroups are $\mathfrak{F}$-subnormal were studied respectively. The ideas of these works were combined in \cite{Murashka2020}, where for a saturated homomorph $\mathfrak{H}$ and a hereditary formation $\mathfrak{F}$ the class $f_{\mathfrak{H}}(\mathfrak{F})$ of groups, all of whose $\mathfrak{H}$-subgroups are $\mathfrak{F}$-subnormal, was studied. 

\begin{theorem}\label{theorem4}
Let $\mathfrak{H}$ be a saturated homomorph, $\mathfrak{F}=f_{\mathfrak{H}}(\mathfrak{F})$ be a hereditary formation, $\mathfrak{H}\subseteq \mathfrak{F}$ and $G$ be a group. Then:
  
$(1)$ $\mathrm{Int}_{\mathfrak{F}}(G/N)=\mathrm{Int}_{\mathfrak{F}}(G)/N$ for any $N\unlhd G$ with $N\leq \mathrm{Int}_{\mathfrak{F}}(G)$.
  
$(2)$ $\mathrm{Int}_{\mathfrak{F}}(H)N/N\leq \mathrm{Int}_{\mathfrak{F}}(HN/N)$ for any $H\leq G$ and $N\unlhd G$.

$(3)$ If $L,S\leq G$, then $\mathrm{Int}_{\mathfrak{F}}(L)\cap S\leq \mathrm{Int}_{\mathfrak{F}}(L\cap S)$.
\end{theorem}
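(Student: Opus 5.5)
The plan is to reduce all three statements to one mechanism: to show that a subgroup $X$ lies in $\mathfrak{F}$ it suffices, since $\mathfrak{F}=f_{\mathfrak{H}}(\mathfrak{F})$, to check that every $\mathfrak{H}$-subgroup of $X$ is $\mathfrak{F}$-subnormal in $X$. I will use four standard facts about a hereditary formation $\mathfrak{F}$:
\begin{enumerate}
\item[(a)] in a group of $\mathfrak{F}$ every subgroup is $\mathfrak{F}$-subnormal, since each quotient $H_i/Core_{H_i}(H_{i-1})$ is an image of a subgroup of an $\mathfrak{F}$-group;
\item[(b)] $\mathfrak{F}$-subnormality is transitive (see \cite[Chapter 6]{BallesterBolinches2006});
\item[(c)] if $A\unlhd X$ and $X/A\in\mathfrak{F}$, then every subgroup of $X$ containing $A$ is $\mathfrak{F}$-subnormal in $X$, and images of $\mathfrak{F}$-subnormal subgroups are $\mathfrak{F}$-subnormal;
\item[(d)] $\mathrm{Int}_{\mathfrak{F}}(X)$ is a normal subgroup of $X$ lying in $\mathfrak{F}$, and for every $\mathfrak{F}$-subgroup $K$ of $X$ we have $K\,\mathrm{Int}_{\mathfrak{F}}(X)\in\mathfrak{F}$, because $K$ lies in some $\mathfrak{F}$-maximal subgroup, which contains $\mathrm{Int}_{\mathfrak{F}}(X)$.
\end{enumerate}

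For (3), let $I=\mathrm{Int}_{\mathfrak{F}}(L)$ and $J=I\cap S$, which is normal in $L\cap S$. Take any $\mathfrak{F}$-maximal subgroup $T$ of $L\cap S$; I will show $JT\in\mathfrak{F}$, so that $J\le T$ by maximality. First, $J\in\mathfrak{F}$ and $JT/J\simeq T/(T\cap J)\in\mathfrak{F}$. Let $K$ be an $\mathfrak{H}$-subgroup of $JT$. Since $K\in\mathfrak{H}\subseteq\mathfrak{F}$ and $K\le L$, (d) gives $IK\in\mathfrak{F}$. Then $KJ\le IK\cap S$, so $KJ\in\mathfrak{F}$ by heredity. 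Hence $K$ is $\mathfrak{F}$-subnormal in $KJ$ by (a), and $KJ$ is $\mathfrak{F}$-subnormal in $JT$ by (c). By (b), $K$ is $\mathfrak{F}$-subnormal in $JT$, and therefore $JT\in f_{\mathfrak{H}}(\mathfrak{F})=\mathfrak{F}$.

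For (2), first replace $G$ by $H$ and $N$ by $H\cap N$, using $HN/N\simeq H/(H\cap N)$. It then suffices to prove $\mathrm{Int}_{\mathfrak{F}}(G)N/N\le\mathrm{Int}_{\mathfrak{F}}(G/N)$. Put $I=\mathrm{Int}_{\mathfrak{F}}(G)$ and let $M/N$ be $\mathfrak{F}$-maximal in $G/N$; I will show $IM/N\in\mathfrak{F}$, which forces $I\le M$. Let $K/N$ be an $\mathfrak{H}$-subgroup of $IM/N$.
\begin{enumerate}
\item[(i)] Choose $K_0$ minimal with $K_0N=K$. Then $K_0\cap N\le\Phi(K_0)$, and since $\mathfrak{H}$ is a saturated homomorph, $K_0\in\mathfrak{H}\subseteq\mathfrak{F}$.
\item[(ii)] By (d), $K_0I\in\mathfrak{F}$, so $K_0IN/N\in\mathfrak{F}$.
\item[(iii)] The subgroup $K_0IN/N$ contains the normal subgroup $IN/N$ of $IM/N$, and the quotient $IM/IN$ is an image of $M/N\in\mathfrak{F}$. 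By (c), $K_0IN/N$ is $\mathfrak{F}$-subnormal in $IM/N$.
\item[(iv)] By (a), $K/N$ is $\mathfrak{F}$-subnormal in $K_0IN/N$.
\end{enumerate}
Transitivity (b) then gives that $K/N$ is $\mathfrak{F}$-subnormal in $IM/N$, so $IM/N\in\mathfrak{F}$.

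For (1), the inclusion $\mathrm{Int}_{\mathfrak{F}}(G)/N\le\mathrm{Int}_{\mathfrak{F}}(G/N)$ is (2) with $H=G$. For the reverse inclusion, it suffices to show that when $N\le\mathrm{Int}_{\mathfrak{F}}(G)$, every $M$ with $M/N\in\mathfrak{F}$ lies in $\mathfrak{F}$. Then the $\mathfrak{F}$-maximal subgroups of $G/N$ are exactly $M/N$ with $M$ $\mathfrak{F}$-maximal in $G$ (all of which contain $N$), and the intersections correspond. The argument repeats the one for (2). Let $K$ be an $\mathfrak{H}$-subgroup of $M$.
\begin{enumerate}
\item[(i)] By (d), $KN\in\mathfrak{F}$, since $N\le\mathrm{Int}_{\mathfrak{F}}(G)$.
\item[(ii)] $K$ is $\mathfrak{F}$-subnormal in $KN$ by (a).
\item[(iii)] $KN$ is $\mathfrak{F}$-subnormal in $M$, because $KN/N$ is $\mathfrak{F}$-subnormal in $M/N\in\mathfrak{F}$ and maximal chains lift through $N$.
\end{enumerate}
Hence $M\in f_{\mathfrak{H}}(\mathfrak{F})=\mathfrak{F}$.

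The step I expect to be the main obstacle is the justification of the auxiliary facts, especially transitivity of $\mathfrak{F}$-subnormality and the saturated-homomorph argument in (2)(i). For (2)(i), I plan to use that $K_0/(K_0\cap N)\simeq K/N\in\mathfrak{H}$ and that $K_0\cap N\le\Phi(K_0)$ gives $K_0\in\mathfrak{H}$. I also need to confirm that $f_{\mathfrak{H}}(\mathfrak{F})$ means exactly that all $\mathfrak{H}$-subgroups are $\mathfrak{F}$-subnormal, so that the membership test $X\in\mathfrak{F}$ used throughout is valid.
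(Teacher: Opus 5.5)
Your proof is correct, but it takes a different route from the paper's.

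\textbf{The paper's route.} The paper never argues with $\mathrm{Int}_{\mathfrak{F}}$ directly. It introduces $\mathcal{I}_{\mathfrak{H}}^{\mathfrak{F}}(G)$, the intersection of all weak $\mathfrak{F}$-subnormalizers of $\mathfrak{H}$-subgroups. In Theorem~\ref{thm1} it shows, for an arbitrary hereditary formation, that this is the largest normal subgroup $N_1$ with $H$ $\mathfrak{F}$-sn $HN_1$ for every $\mathfrak{H}$-subgroup $H$, and that it has the quotient, image and intersection properties. Theorem~\ref{thm2} then shows that $\mathcal{I}_{\mathfrak{H}}^{\mathfrak{F}}(G)=\mathrm{Int}_{\mathfrak{F}}(G)$ for all $G$ exactly when $\mathfrak{F}=f_{\mathfrak{H}}(\mathfrak{F})$. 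With that, the statement is a one-line transfer.

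\textbf{Your route.} You apply the membership test $X\in f_{\mathfrak{H}}(\mathfrak{F})=\mathfrak{F}$ directly to $JT$, $IM/N$ and $M$. This is essentially the argument of Theorem~\ref{thm2}, inlined into each part. The ingredients are the same: Lemma~\ref{lem1}, heredity, and in (2) the minimal-supplement argument of Lemma~\ref{l7}.

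\textbf{What each buys.} Your version is shorter and self-contained, and it shows clearly which hypotheses are used where. Only (2) needs $\mathfrak{H}$ to be saturated. Your bijection argument in (1) already yields both inclusions on its own, so you need not invoke (2) there. Consequently, in the setting of this theorem, (1) holds for any homomorph $\mathfrak{H}\subseteq\mathfrak{F}$ with $\mathfrak{F}=f_{\mathfrak{H}}(\mathfrak{F})$. By contrast, the paper's Theorem~\ref{thm1}(3) gives only the inclusion $\mathcal{I}_{\mathfrak{H}}^{\mathfrak{F}}(G/N)\le\mathcal{I}_{\mathfrak{H}}^{\mathfrak{F}}(G)/N$ without saturation. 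The paper's detour, in turn, packages everything into one subgroup defined for every hereditary formation, together with the characterization in Theorem~\ref{thm1}(1). It reuses that characterization later, for instance at the end of the proof of Theorem~\ref{theorem5}.

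\textbf{One point to state explicitly.} Your fact (d) requires $\mathfrak{F}\neq\emptyset$. This holds because $1\in f_{\mathfrak{H}}(\mathfrak{F})$.
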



In \cite{Murashka2020} it was shown that the constructions $v_{\pi}\mathfrak{F}$ and $w_{\pi}\mathfrak{F}$ do not necessarily lead to saturated formations, i.e.  hereditary $\check{S}$-formations have this property. For the applications of such formations, see \cite[$\S 6.4$]{BallesterBolinches2006}, \cite[$\S 2.4$]{Kamornikov2003} and \cite[$\S 24$]{ShemetkovSkiba1989}.
 
\begin{corollary}\label{coroll4} Let $\mathfrak{F}$ be a non-empty hereditary formation with one of the following conditions: 

$(a)$ $\mathfrak{F}$ is a saturated formation \emph{\cite[Theorem C]{Skiba2012}}; 

$(b)$ $\mathfrak{F}=v_{\pi}\mathfrak{F}$ or $\mathfrak{F}=w_{\pi}\mathfrak{F}$; 

$(c)$ $\mathfrak{F}$ is a $\check{S}$-formation.\\ Then for every group $G$ the following statements hold:

$(1)$ $\mathrm{Int}_{\mathfrak{F}}(G/N)=\mathrm{Int}_{\mathfrak{F}}(G)/N$ for any $N\unlhd G$ with $N\leq \mathrm{Int}_{\mathfrak{F}}(G)$.
  
$(2)$ $\mathrm{Int}_{\mathfrak{F}}(H)N/N\leq \mathrm{Int}_{\mathfrak{F}}(HN/N)$ for any $H\leq G$ and $N\unlhd G$.

$(3)$ If $L,S\leq G$, then $\mathrm{Int}_{\mathfrak{F}}(L)\cap S\leq \mathrm{Int}_{\mathfrak{F}}(L\cap S)$.
\end{corollary}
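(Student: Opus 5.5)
The plan is to deduce Corollary~\ref{coroll4} from Theorem~\ref{theorem4}. For each of $(a)$, $(b)$, $(c)$ I will exhibit a saturated homomorph $\mathfrak{H}\subseteq\mathfrak{F}$ with $\mathfrak{F}=f_{\mathfrak{H}}(\mathfrak{F})$; statements $(1)$--$(3)$ then follow at once. The inclusion $\mathfrak{F}\subseteq f_{\mathfrak{H}}(\mathfrak{F})$ is automatic once $\mathfrak{H}\subseteq\mathfrak{F}$. Indeed, if $G\in\mathfrak{F}$, then every subgroup of $G$ is $\mathfrak{F}$-subnormal: along any maximal chain $H_{i-1}\subset H_i$ we have $H_i\in\mathfrak{F}$ by heredity, so $H_i/Core_{H_i}(H_{i-1})\in\mathfrak{F}$. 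The real content in each case is therefore the reverse inclusion $f_{\mathfrak{H}}(\mathfrak{F})\subseteq\mathfrak{F}$.

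For $(b)$ this is essentially the definition. The class $v_\pi\mathfrak{F}$ (resp.\ $w_\pi\mathfrak{F}$) consists of the groups all of whose cyclic primary (resp.\ primary) $\pi$-subgroups are $\mathfrak{F}$-subnormal. Following \cite{Murashka2020}, it is $f_{\mathfrak{H}}(\mathfrak{F})$ for $\mathfrak{H}$ the class of cyclic primary $\pi$-groups (resp.\ primary $\pi$-groups) together with $1$, and \cite{Murashka2020} shows these are saturated homomorphs. One caveat: the class of cyclic $p$-groups is quotient-closed and saturated, since $H/\Phi(H)$ cyclic forces $H$ cyclic. I also need $\mathfrak{H}\subseteq\mathfrak{F}$. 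Here $\mathfrak{F}=v_\pi\mathfrak{F}$ gives that every cyclic $p$-subgroup ($p\in\pi$) of every group is $\mathfrak{F}$-subnormal in it. Applied to $G=Z_p$, the only maximal chain $1\subset Z_p$ gives $Z_p\in\mathfrak{F}$. For cyclic $Z_{p^n}$ the chain $Z_{p^{n-1}}\subset Z_{p^n}$ gives $Z_p$-quotients, but this does not by itself put $Z_{p^n}$ in $\mathfrak{F}$; instead, applying the hypothesis to $G=Z_{p^n}$ itself shows $Z_{p^n}\in v_\pi\mathfrak{F}=\mathfrak{F}$, since $Z_{p^n}$ is $\mathfrak{F}$-subnormal in itself. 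The same trick works for primary groups.

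For $(a)$ and $(c)$ I take $\mathfrak{H}=\mathfrak{N}_p\cap\mathfrak{F}$, or more simply $\mathfrak{H}=(1)$, the class of identity groups, which is a saturated homomorph contained in $\mathfrak{F}$. Then $f_{\mathfrak{H}}(\mathfrak{F})$ is the class of groups in which $1$ is $\mathfrak{F}$-subnormal. By definition a $\check{S}$-formation is a hereditary formation containing every group $G$ in which $1$ is $\mathfrak{F}$-subnormal, i.e.\ $G^{\mathfrak{F}}$-condition via $\mathfrak{F}$-subnormality of $1$ \cite[\S 6.4]{BallesterBolinches2006}. So $(c)$ is exactly $\mathfrak{F}=f_{(1)}(\mathfrak{F})$. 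For $(a)$ I then only need the known fact that a hereditary saturated formation is a $\check{S}$-formation \cite[\S 6.4]{BallesterBolinches2006}, \cite{Kamornikov2003}. If $1$ is $\mathfrak{F}$-subnormal in $G$, take a maximal subgroup $M$ in a chain from $1$ with $G/Core_G(M)\in\mathfrak{F}$. Induction gives $M\in\mathfrak{F}$, and a standard minimal-counterexample argument using saturation (a minimal non-$\mathfrak{F}$ group in a hereditary saturated formation is the monolithic/Schmidt-type case where $G^{\mathfrak{F}}$ is not contained in every maximal subgroup's core) yields $G\in\mathfrak{F}$.

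The main obstacle is confirming, in case $(b)$, that the relevant $\mathfrak{H}$ is a saturated homomorph contained in $\mathfrak{F}$. I would handle it as sketched above, citing \cite{Murashka2020}. In case $(a)$ the obstacle is the reduction to the $\check{S}$-property, which I would simply cite.
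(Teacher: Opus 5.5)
Your plan of finding a saturated homomorph $\mathfrak{H}\subseteq\mathfrak{F}$ with $\mathfrak{F}=f_{\mathfrak{H}}(\mathfrak{F})$ and then applying Theorem~\ref{theorem4} is the paper's plan. The gap is that your choices of $\mathfrak{H}$ do not work.

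\textbf{Cases $(a)$ and $(c)$: $\mathfrak{H}=(1)$ fails.} The class $f_{(1)}(\mathfrak{F})$ consists of the groups in which $1$ is $\mathfrak{F}$-subnormal, and this is usually much larger than $\mathfrak{F}$. Take $\mathfrak{F}=\mathfrak{N}$, which is hereditary, saturated, and a $\check{S}$-formation in the paper's sense, since every minimal non-nilpotent group is a Schmidt group. A composition series of a soluble group is a maximal chain from $1$ whose steps are normal of prime index, for example $1\subset A_3\subset S_3$. So $f_{(1)}(\mathfrak{N})=\mathfrak{S}\neq\mathfrak{N}$.
\begin{itemize}
\item Your description of a $\check{S}$-formation is not its definition; the definition is in terms of minimal non-$\mathfrak{F}$-groups.
\item The ``known fact'' that hereditary saturated formations are $\check{S}$-formations is false. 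For $\mathfrak{U}$, the group $(Z_3\times Z_3)\rtimes Z_4$, with $Z_4$ acting irreducibly, is minimal non-supersoluble. It is not a Schmidt group, because it contains the non-nilpotent subgroup $(Z_3\times Z_3)\rtimes Z_2$.
\item Your minimal-counterexample sketch controls only one maximal subgroup $M$. Knowing $M\in\mathfrak{F}$ and $G/Core_G(M)\in\mathfrak{F}$ does not give $G\in\mathfrak{F}$; $S_3$ with $\mathfrak{N}$ is again a counterexample.
\end{itemize}

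\textbf{What is missing.} For $(a)$, take $\mathfrak{H}=\mathfrak{F}$ itself, which is a saturated homomorph, and prove $\mathfrak{F}=f_{\mathfrak{F}}(\mathfrak{F})$ as in Lemma~\ref{122}.
\begin{itemize}
\item In a minimal counterexample $G$, every proper subgroup lies in $\mathfrak{F}$, by Lemma~\ref{lem11} and minimality.
\item Hence \emph{every} maximal subgroup is an $\mathfrak{F}$-subgroup, so it is $\mathfrak{F}$-subnormal and therefore contains $G^{\mathfrak{F}}$.
\item This gives $G^{\mathfrak{F}}\le\Phi(G)$, and saturation finishes the proof.
\end{itemize}
For $(c)$, the paper does not use $(1)$ at all. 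It uses the nontrivial fact that a $\check{S}$-formation satisfies $\mathfrak{F}=v\mathfrak{F}$ \cite[Corollary 3.9]{Murashka2020}. This reduces $(c)$ to $(b)$ with $\pi$ equal to the set of all primes.

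\textbf{Case $(b)$: your argument for $\mathfrak{H}\subseteq\mathfrak{F}$ is circular.} The hypothesis $\mathfrak{F}=v_\pi\mathfrak{F}$ says that a group whose cyclic primary $\pi$-subgroups are $\mathfrak{F}$-subnormal lies in $\mathfrak{F}$. It does not say these subgroups are $\mathfrak{F}$-subnormal in every group. So nothing tells you that $1$ is $\mathfrak{F}$-subnormal in $Z_p$. For $Z_{p^n}$ you also overlook the proper subgroups $Z_{p^k}$, whose $\mathfrak{F}$-subnormality already requires $Z_p\in\mathfrak{F}$.

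In fact the inclusion can fail. With your $\mathfrak{H}$ (which contains $1$), take $\mathfrak{F}=\mathfrak{N}_2$ and $\pi=\{2,3\}$. Then $f_{\mathfrak{H}}(\mathfrak{F})=\mathfrak{F}$, because $1$ being $\mathfrak{N}_2$-subnormal in $G$ forces $G$ to be a $2$-group. Yet $Z_3\in\mathfrak{H}\setminus\mathfrak{F}$.

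The paper fixes this as follows:
\begin{itemize}
\item It replaces $\pi$ by $\pi\cap\pi(\mathfrak{F})$, using $\mathfrak{F}=v_{\pi\cap\pi(\mathfrak{F})}\mathfrak{F}$.
\item It takes $\mathfrak{H}$ to be the class of cyclic primary $(\pi\cap\pi(\mathfrak{F}))$-groups.
\item For $p\in\pi(\mathfrak{F})$, heredity gives $Z_p\in\mathfrak{F}$. So every subgroup of a $p$-group is $\mathfrak{F}$-subnormal, and $\mathfrak{N}_{\pi\cap\pi(\mathfrak{F})}\subseteq v_\pi\mathfrak{F}=\mathfrak{F}$.
\end{itemize}
The same correction is needed for $w_\pi\mathfrak{F}$.
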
 

Recall that the $\mathfrak{F}$-hypercenter is defined via the action of a group on its chief factors. The similar result for $\mathrm{Int}_{\mathfrak{F}}(G)$ was obtained in \cite{Beidleman2010} when $\mathfrak{F}$ is a hereditary saturated formation and $G$ is a soluble group. 

\begin{theorem}\label{theorem5}
Let $\mathfrak{H}$ be a saturated homomorph, $\mathfrak{F}=f_{\mathfrak{H}}(\mathfrak{F})$ be a hereditary formation, $\mathfrak{H}\subseteq \mathfrak{F}$ and $H/K$ be a chief factor of a group $G$. Then $H/K\leq \mathrm{Int}_{\mathfrak{F}}(G/K)$ if and only if $H/K\rtimes TH/C_G(H/K)\in \mathfrak{F}$ for every $\mathfrak{H}$-subgroup $T/C_G(H/K)$ of $G/C_G(H/K)$.
\end{theorem}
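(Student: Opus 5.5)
Write $C=C_G(H/K)$ and $\bar G=G/K$. The plan is to reduce to the case $K=1$, work inside the semidirect product $H\rtimes G$ (more precisely $H\rtimes G/C$), and translate membership of $H$ in $\mathrm{Int}_{\mathfrak F}(G)$ into a statement about $\mathfrak F$-subgroups. The key tool is the description of $\mathrm{Int}_{\mathfrak F}(G)$ for $\mathfrak F=f_{\mathfrak H}(\mathfrak F)$ developed earlier (in the spirit of Theorem~\ref{theorem4}). In this setting, $x\in \mathrm{Int}_{\mathfrak F}(G)$ if and only if $\langle x,T\rangle\in\mathfrak F$, or more conveniently $\langle x\rangle^{T}T\in\mathfrak F$, for every $\mathfrak H$-subgroup $T$. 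The reason is that $\mathfrak F$-subgroups are detected by their $\mathfrak H$-subgroups being $\mathfrak F$-subnormal.

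\emph{Reduction.} By Theorem~\ref{theorem4}(1) applied with $N=K$ (when $K\le \mathrm{Int}_{\mathfrak F}(G)$), or simply by passing to $\bar G$ directly, since the statement only involves $\bar G$, we may assume $K=1$. Then $H$ is a minimal normal subgroup of $G$.

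\emph{Necessity.} Suppose $H\le \mathrm{Int}_{\mathfrak F}(G)$, and let $T/C$ be an $\mathfrak H$-subgroup of $G/C$. Since $\mathfrak H$ is a saturated homomorph, a Gaschütz/Frattini-type argument lets us choose an $\mathfrak H$-subgroup $T_0\le T$ with $T_0C=T$; here saturation handles the part of $C$ lying in a Frattini-like position. Then $T_0\le F$ for some $\mathfrak F$-maximal subgroup $F$, which contains $H$, so $HT_0\in\mathfrak F$. Because $H/1$ is a chief factor, $HT_0$ acts on $H$ with image $T_0C/C=T/C$. The group $H\rtimes T/C$ is then obtained from $HT_0$ modulo $C_{HT_0}(H)$ via the standard argument (Shemetkov--Skiba) showing $H\rtimes (X/C_X(H))\in\mathfrak F$ whenever $HX\in\mathfrak F$ for a hereditary formation; this uses the diagonal embedding into $HX/C_X(H)\times HX/H$. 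Thus $H\rtimes TH/C\in\mathfrak F$.

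\emph{Sufficiency.} Let $F$ be any $\mathfrak F$-maximal subgroup; we must show $H\le F$, i.e.\ $HF\in\mathfrak F$. Since $\mathfrak F=f_{\mathfrak H}(\mathfrak F)$, it suffices to show that every $\mathfrak H$-subgroup $T$ of $HF$ is $\mathfrak F$-subnormal in $HF$. Consider $TH$. By hypothesis $H\rtimes TC/C\in\mathfrak F$, and $TH/C_{TH}(H)$ embeds in it. Combining this with $T\in\mathfrak H\subseteq\mathfrak F$ and the same diagonal argument, we get $TH\in\mathfrak F$ after handling $H\cap \Phi$-type issues. Next we show $T$ is $\mathfrak F$-subnormal in $TH$ and $TH$ is $\mathfrak F$-subnormal in $HF$. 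The latter follows from $T\le F$ up to conjugation, since $\mathfrak H$-subgroups of $HF$ lie in $\mathfrak F$-subgroups and $F$ can be chosen to contain $T$ via Theorem~\ref{theorem4}-type arguments, while $HF/\mathrm{Core}$ quotients relate to $F$. The main obstacle will be this sufficiency step. Nonabelian $H$ complicates the diagonal embedding, since $H\cap C\neq 1$ is possible only for abelian $H$, and $\mathfrak F$ need not be saturated. I would first try to prove $HF\in\mathfrak F$ by induction on $|G|$ applied to the proper subgroup $HF$ when $HF\ne G$, using Theorem~\ref{theorem4}(3). This leaves the case $G=HF$ with $F$ a complement or a supplement to $H$, which I would treat directly through the criterion that all $\mathfrak H$-subgroups are $\mathfrak F$-subnormal.
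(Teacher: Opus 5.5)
\textbf{Verdict: there is a genuine gap, in the sufficiency half.} It sits at the step \emph{the same diagonal argument gives $TH\in\mathfrak F$ after handling $H\cap\Phi$-type issues}. Everything depends on showing that each $\mathfrak H$-subgroup $T$ is $\mathfrak F$-subnormal in $TH$.

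Once that is known, the rest of your reduction works. $TH$ is $\mathfrak F$-subnormal in $HF$ simply because $HF/H\simeq F/(F\cap H)\in\mathfrak F$ and Lemma~\ref{lem1}(1) applies. Your stated reason, that $T$ lies in some conjugate of $F$, is false in general. Transitivity then gives $HF\in f_{\mathfrak H}(\mathfrak F)=\mathfrak F$.

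Now consider your diagonal map $TH\to TH/C_{TH}(H)\times TH/H$. Its kernel is $H\cap C_{TH}(H)=\mathrm{Z}(H)$, so it settles the non-abelian case. For abelian $H$ the kernel is all of $H$. If $H\cap T=1$ you could still write $TH=H\rtimes T$ as a subdirect product of $H\rtimes T/C_T(H)$ and $T$, and the case $H\le T$ is trivial. But when $1\neq H\cap T\neq H$, $T$ is not a complement to $H$ in $TH$. Since $\mathfrak F$ is not assumed saturated, nothing in your proposal handles these \emph{$H\cap\Phi$-type issues}. Your fallback induction on $|G|$ does not help: the case $G=HF$ it leaves is exactly the original difficulty, which you only promise to treat directly.

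\textbf{The idea the paper uses.} The hypothesis controls every chief factor of $TH$ below $H$:
\begin{itemize}
\item Applying the hypothesis to $TC/C\in\mathfrak H$ gives $H\rtimes TH/C_{TH}(H)\simeq H\rtimes THC/C\in\mathfrak F$.
\item Passing to the subgroup $H_1\rtimes TH/C_{TH}(H)$ and then to quotients shows that every chief factor $H_1/K_1$ of $TH$ below $H$ is $\mathfrak F$-central.
\item Together with $TH/H\simeq T/(T\cap H)\in\mathfrak F$, this makes $TH$ $\mathfrak F$-hypercentral.
\item $TH\in\mathfrak F$ then follows because $\mathfrak F=f_{\mathfrak H}(\mathfrak F)$ forces $\mathfrak F$ to be $Z$-saturated (Lemma~\ref{Stat2}).
\item Theorems~\ref{thm1}(1) and~\ref{thm2} give $H\le\mathrm{Int}_{\mathfrak F}(G)$.
\end{itemize}

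\textbf{An alternative repair of your route.} You can avoid $Z$-saturation by aiming only at $\mathfrak F$-subnormality. For abelian $H$ we have $H\cap T\unlhd TH$, and $TH/(H\cap T)=(H/(H\cap T))\rtimes(T/(H\cap T))$. This is a subdirect product of $(H/(H\cap T))\rtimes T/C_T(H)$, a quotient of the hypothesis group, and $T/(H\cap T)\in\mathfrak F$. Hence $T/(H\cap T)$ is $\mathfrak F$-subnormal in $TH/(H\cap T)$, and Lemma~\ref{lem1}(1) lifts this to $T$ being $\mathfrak F$-subnormal in $TH$. Neither argument appears in your proposal.

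\textbf{Necessity is fine.} It coincides with the paper's argument: Lemma~\ref{l7}, an $\mathfrak F$-maximal subgroup containing $T_0$, then Theorem~\ref{BK}. One correction: for non-abelian $H$ the image of $HT_0$ in $\mathrm{Aut}(H)$ is $HT/C$, not $T/C$.
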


\begin{corollary}\label{coroll8} Let $\mathfrak{F}$ be a non-empty hereditary saturated formation, $G$ be a group and $f$ be its local definition. Then $\mathrm{Int}_{\mathfrak{F}}(G)$ is the largest normal subgroup of $G$ such that $HT/C_G(H/K)\in \mathfrak{N}_pf(p)$ for every $p\in \pi(H/K)$, chief factor $H/K$ of $G$ below it and $\mathfrak{F}$-subgroup $T/C_G(H/K)$ of $G/C_G(H/K)$.     
\end{corollary}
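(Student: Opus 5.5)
We will derive Corollary~\ref{coroll8} from Theorem~\ref{theorem5} applied with $\mathfrak{H}=\mathfrak{F}$. The plan has four steps.

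\textbf{Step 1: Theorem~\ref{theorem5} applies.} A saturated formation is a saturated homomorph, so $\mathfrak{H}=\mathfrak{F}$ is admissible. We must check that $\mathfrak{F}=f_{\mathfrak{F}}(\mathfrak{F})$, i.e.\ that $\mathfrak{F}$ contains every group all of whose $\mathfrak{F}$-subgroups are $\mathfrak{F}$-subnormal. Let $G$ be such a group and suppose $G\notin\mathfrak{F}$. Choose an $\mathfrak{F}$-projector $E$ of $G$; it exists because $\mathfrak{F}$ is saturated. Then $E$ is $\mathfrak{F}$-subnormal, and since $E$ is also $\mathfrak{F}$-abnormal, $E=G$ (standard, \cite[\S 6.1]{BallesterBolinches2006}). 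This contradicts $G\notin\mathfrak{F}$. Equivalently, this is the case (a) of Corollary~\ref{coroll4}.

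\textbf{Step 2: reduce to chief factors.} Put $I=\mathrm{Int}_{\mathfrak{F}}(G)$; it is normal because it is characteristic.
\begin{itemize}
\item If $H/K$ is a chief factor of $G$ with $H\le I$, then Theorem~\ref{theorem4}(1) gives $H/K\le I/K=\mathrm{Int}_{\mathfrak{F}}(G/K)$.
\item Conversely, suppose $N\unlhd G$ and every chief factor $H/K$ of $G$ below $N$ satisfies $H/K\le \mathrm{Int}_{\mathfrak{F}}(G/K)$. Take a chief series $1=N_0<\dots<N_t=N$ and induct on $i$, using Theorem~\ref{theorem4}(1) at each step. If $N_{i-1}\le I$, then $\mathrm{Int}_{\mathfrak{F}}(G/N_{i-1})=I/N_{i-1}$, so $N_i\le I$.
\end{itemize}
Hence $I$ is the largest normal subgroup all of whose $G$-chief factors $H/K$ satisfy $H/K\le\mathrm{Int}_{\mathfrak{F}}(G/K)$. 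By Theorem~\ref{theorem5}, this condition is equivalent to
\[
(H/K)\rtimes T/C_G(H/K)\in\mathfrak{F}
\]
for every $\mathfrak{F}$-subgroup $T/C_G(H/K)$ of $G/C_G(H/K)$.

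\textbf{Step 3: translate into the local definition.} Fix such $T$ and write $C=C_G(H/K)$. The group $X=(H/K)\rtimes(T/C)$ has $H/K$ as a normal subgroup acting faithfully, with $H/K$ a direct product of copies of a simple group. Let $f$ be the canonical (full, integrated) local definition. We claim $X\in\mathfrak{F}$ if and only if $HT/C\in\mathfrak{N}_pf(p)$ for all $p\in\pi(H/K)$.
\begin{itemize}
\item For abelian $H/K$, $H/K$ is the unique minimal normal subgroup of $X$ with $C_X(H/K)=H/K$. Then $X\in\mathfrak{F}$ iff $T/C\in f(p)$, iff $X\in\mathfrak{N}_pf(p)$ (since $O_p(X)=H/K$). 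Moreover $HT/C\cong (H/K)\rtimes T/C$ up to the $p$-group $H/K$ modulo the centralizer, so $HT/C\in\mathfrak{N}_pf(p)$ iff $T/C\in f(p)$.
\item For nonabelian $H/K$, $HT/C\cong X$, because $H\cap C=K$ and $HC/C\cong H/K$ with trivial centralizer. So $HT/C$ is a faithful extension of $H/K$ by $T/C$. For saturated $\mathfrak{F}$, $X\in\mathfrak{F}$ iff $X/O_p(X)\in f(p)$ for all $p\in\pi(X)$, and $\mathfrak{F}\subseteq\mathfrak{N}_pf(p)$ for the canonical $f$. For $p\in\pi(H/K)$ we have $O_p(X)=1$, so the relevant condition is $X\in\mathfrak{N}_pf(p)$.
\end{itemize}

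\textbf{Step 4: the main obstacle.} The hardest point is the nonabelian case of Step 3: showing that $X\in\mathfrak{N}_pf(p)$ for all $p\in\pi(H/K)$ forces $X\in\mathfrak{F}$, which requires handling the primes $q\notin\pi(H/K)$. Here I would use that $f$ is an arbitrary local definition and that $H/K\in\mathfrak{F}$, obtained by taking $T=C$ together with heredity. I would then pass to $\mathfrak{F}$-subgroups: applying the condition to all $\mathfrak{F}$-subgroups $T/C$, together with the fact that $T/C$ itself lies in $\mathfrak{F}$, one checks $X/O_q(X)\in f(q)$ via $X/O_q(X)$'s composition structure. If this fails for a general $f$, I would replace $f$ by its canonical definition and observe that the class $\mathfrak{N}_pf(p)$ does not depend on the choice of $f$ on $\pi(\mathfrak{F})$.
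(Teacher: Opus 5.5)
Your Steps~1 and~2 are sound. The projector argument is a legitimate substitute for Lemma~\ref{122}, given that saturated formations have projectors in every finite group: an $\mathfrak{F}$-projector $E$ cannot lie in an $\mathfrak{F}$-normal maximal subgroup, so the top link of an $\mathfrak{F}$-subnormal chain from $E$ forces $E=G$. Step~2 spells out the passage from Theorem~\ref{theorem5} to ``largest normal subgroup'', which the paper leaves implicit. You quote Theorem~\ref{theorem5} with $T/C$ in place of $TH/C$. This is harmless, and you should say why: for abelian $H/K$ one has $TH=T$, and for nonabelian $H/K$ the group $(H/K)\rtimes(T/C)$ maps onto $HT/C$ and embeds in $(H/K)\rtimes(HT/C)$.

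The genuine gap is in Steps~3--4, which carry the whole content of the corollary beyond Theorem~\ref{theorem5}.
\begin{enumerate}
\item In the nonabelian case you leave the main implication open. The claims you do make are false. $HT/C\cong X$ fails: for $T=HC$ one gets $HT/C\cong H/K$ but $X\cong H/K\times H/K$. The criterion ``$X\in\mathfrak{F}$ iff $X/O_p(X)\in f(p)$ for all $p\in\pi(X)$'' and the inclusion $\mathfrak{F}\subseteq\mathfrak{N}_pf(p)$ both fail for the formation $\mathfrak{N}$ of nilpotent groups, its canonical definition $F(p)=\mathfrak{N}_p$, and $X=Z_6$.
\item In the abelian case $H/K$ need not be irreducible under $T/C$, and $O_p(X)$ may be larger than $H/K$. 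Also, ``$X\in\mathfrak{F}$ iff $T/C\in f(p)$'' holds only for a full definition.
\item The corollary concerns an arbitrary local definition $f$, and your fallback is false. For $\mathfrak{F}=\mathfrak{N}_p$, both $f(p)=$ the class of all groups and $F(p)=\mathfrak{N}_p$ (empty at all other primes) define $\mathfrak{F}$, yet $\mathfrak{N}_pf(p)\neq\mathfrak{N}_pF(p)$.
\end{enumerate}

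The missing argument is short and needs neither the primes outside $\pi(H/K)$ nor the canonical definition.
\begin{enumerate}
\item \emph{Nonabelian case.} Put $Y=HT/C$ and $N=HC/C\cong H/K$. Since $H\cap C=K$, we get $C_Y(N)=1$. Hence $O_p(Y)=1$ for every prime $p$, because $O_p(Y)\cap N=1$ forces $O_p(Y)\le C_Y(N)$.
\item \emph{Chief factors above $N$.} They satisfy the local condition, since $Y/N\cong T/(T\cap HC)$ is a quotient of $T/C\in\mathfrak{F}$.
\item \emph{Chief factors below $N$.} Their prime divisors lie in $\pi(H/K)$ only. For such $p$ the hypothesis $Y\in\mathfrak{N}_pf(p)$ gives $Y^{f(p)}\le O_p(Y)=1$. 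So $Y\in f(p)$, and every $Y/C_Y(\overline{U})$ lies in $f(p)$. Hence $Y\in\mathfrak{F}$ by the definition of a local formation, for the given $f$.
\item \emph{Converse.} If $Y\in\mathfrak{F}$, then $Y$ is a subdirect product of the groups $Y/C_Y(\overline{U})\in f(p)$, with $\overline{U}$ running over the chief factors of $Y$ below $N$; their centralizers intersect in $C_Y(N)=1$. So $Y\in f(p)\subseteq\mathfrak{N}_pf(p)$.
\item \emph{From $Y$ to $(H/K)\rtimes Y$.} The kernel of the multiplication map $(H/K)\rtimes Y\to Y$ meets $H/K$ trivially. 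So $(H/K)\rtimes Y$ is a subdirect product of two copies of $Y$; this is the paper's subgroup $H^*$.
\item \emph{Abelian case.} Here $HT=T$. Then $(H/K)\rtimes(T/C)\in\mathfrak{F}$ iff $(T/C)/C_{T/C}(\overline{U})\in f(p)$ for every $T/C$-composition factor $\overline{U}$ of $H/K$. Since $H/K$ is faithful, these centralizers intersect in $O_p(T/C)$. So the condition is $(T/C)/O_p(T/C)\in f(p)$, i.e.\ $T/C\in\mathfrak{N}_pf(p)$, for every local definition $f$. This is exactly why $\mathfrak{N}_p$ appears in the statement.
\end{enumerate}
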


Since $H/K\leq C_G(H/K)$ for every chief factor $H/K$ of a soluble group $G$, from Corollary \ref{coroll8} we get:

\begin{corollary}[{\cite[Main Theorem]{Beidleman2010}}]\label{coroll7} Let $\mathfrak{F}$ be a non-empty hereditary saturated formation, $f$ be a local definition of $\mathfrak{F}$, $G$ be a soluble group and $$g(p)=(L\mid \mathrm{O}_p(L)=1 \, \text{and every} \, \mathfrak{F}\text{-subgroup of} \, L \, \text{belongs to} \, \mathfrak{N}_pf(p)).$$ Then $\mathrm{Int}_{\mathfrak{F}}(G)$ is the largest among normal subgroups $M$ of $G$ with the following property: if $H/K$ is a $p$-chief factor of $G$ with $H\subseteq M$, then $G/C_G(H/K)\in g(p)$. 
\end{corollary}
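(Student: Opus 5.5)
We derive Corollary~\ref{coroll7} from Corollary~\ref{coroll8}, whose hypotheses are met since $\mathfrak{F}$ is a non-empty hereditary saturated formation with local definition $f$. Fix a soluble group $G$ and a $p$-chief factor $H/K$ of $G$, and put $C=C_G(H/K)$. Since $G$ is soluble, $H/K$ is an elementary abelian $p$-group, so $H\leq C$; therefore $HT=T$ for every subgroup $T$ with $C\leq T\leq G$. Also $\pi(H/K)=\{p\}$. Hence the condition of Corollary~\ref{coroll8} for this chief factor reads: $T/C\in\mathfrak{N}_pf(p)$ for every $\mathfrak{F}$-subgroup $T/C$ of $G/C$.

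This says exactly that every $\mathfrak{F}$-subgroup of $L=G/C$ lies in $\mathfrak{N}_pf(p)$. The other condition in the definition of $g(p)$, namely $\mathrm{O}_p(G/C)=1$, holds automatically for the centralizer of any $p$-chief factor: it is the standard fact \cite[Chapter A]{Doerk1992} that $\mathrm{O}_p(G/C_G(H/K))=1$ for a $p$-chief factor $H/K$. It follows by a short argument: $\mathrm{O}_p(G/C)$ acts on the irreducible $\mathbb{F}_p[G/C]$-module $H/K$. Because it is a normal $p$-subgroup, its fixed points on $H/K$ form a nonzero $G$-invariant subspace, hence all of $H/K$. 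Thus $\mathrm{O}_p(G/C)$ acts trivially, and so it is trivial, since $G/C$ acts faithfully on $H/K$. Consequently, for a $p$-chief factor $H/K$ of a soluble group, the condition of Corollary~\ref{coroll8} is equivalent to $G/C_G(H/K)\in g(p)$.

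Finally, Corollary~\ref{coroll8} describes $\mathrm{Int}_{\mathfrak{F}}(G)$ as the largest normal subgroup $M$ such that every chief factor $H/K$ below $M$ satisfies the condition. Every chief factor of a soluble group is a $p$-chief factor for some prime $p$. By the equivalence just established, the family of normal subgroups admissible in Corollary~\ref{coroll8} coincides with the family of normal subgroups $M$ having the property in Corollary~\ref{coroll7}. The two families therefore have the same largest member, which is $\mathrm{Int}_{\mathfrak{F}}(G)$.

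The only step needing care is the translation of the condition of Corollary~\ref{coroll8}. One must check that "$\mathfrak{F}$-subgroup $T/C$" is read inside $G/C$ in both statements, so that the quantifier over $\mathfrak{F}$-subgroups matches the one in the definition of $g(p)$, and one must verify that $\mathrm{O}_p(G/C)=1$ holds, as shown above. Everything else is immediate from $H\leq C_G(H/K)$.
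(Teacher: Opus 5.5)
Your proposal is correct and follows the paper's route. The paper also obtains this corollary by specializing Corollary~\ref{coroll8}: for a soluble $G$ one has $H\leq C_G(H/K)$, so $HT=T$ and $\pi(H/K)=\{p\}$. Your explicit check that $\mathrm{O}_p(G/C_G(H/K))=1$ fills in a detail the paper leaves implicit.
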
 

\section{Preliminaries} 

The notation and terminology agree with \cite{BallesterBolinches2006, Doerk1992}. Let us recall some of them: $Z_n$ is the cyclic group of order $n$; $A_n$ is the alternating group of degree $n$; $Core_G(H)$ is the largest normal subgroup of the group $G$ that is contained in $H$; if $M$ is a proper subgroup with the property that $M=H$ whenever $M\leq H< G$, then $M$ is called a maximal subgroup of $G$;  a chain of subgroups $H=H_0\subset H_1\subset \ldots \subset H_n=G$ is called maximal in $G$ if either $n=0$ or $H_{i-1}$ is a maximal subgroup of $H_i$ for any $i>0$; a group $G\in \mathfrak{F}$ is called an $\mathfrak{F}$-group; a subgroup $U$ of $G$ is called $\mathfrak{F}$-maximal in $G$ provided $U\in \mathfrak{F}$ and if $U\leq V\leq G$, $V\in \mathfrak{F}$, then $U=V$; $\Phi(G)$ is the Frattini subgroup of $G$, i.e. the intersection of all maximal subgroups of $G$; $\mathrm{F}(G)$ is the Fitting subgroup of $G$, i.e. the largest normal nilpotent subgroup of $G$; $N\rtimes M$ is the semidirect product of $N$ with $M$; $\mathrm{Aut}(G)$ is the group of all automorphisms of a group $G$; $G^\mathfrak{F}$ is the $\mathfrak{F}$-residual of $G$, i.e. the intersection of all those normal subgroups $N$ of $G$ for which $G/N\in \mathfrak{F}$; $G_\mathfrak{S}$ is the soluble radical of $G$, i.e. the largest normal soluble subgroup of $G$.

Recall \cite[Chapter II, Definition 1.1]{Doerk1992} that a class of groups is a collection $\mathfrak{X}$ of groups with the property that if $G\in \mathfrak{X}$ and if $H\simeq G$, then $H\in \mathfrak{X}$. The classes of all soluble groups and of all $p$-groups for a prime $p$ are denoted by $\mathfrak{S}$ and $\mathfrak{N}_p$, respectively. A class of groups $\mathfrak{H}$ is called a homomorph if $G\in \mathfrak{H}$ and $N\unlhd G$, then $G/N\in \mathfrak{H}$. A formation is a homomorph $\mathfrak{F}$ that is closed under taking subdirect products, i.e. from $H/A\in \mathfrak{F}$, $H/B\in \mathfrak{F}$ it follows that $H/(A\cap B)\in \mathfrak{F}$. A class of groups $\mathfrak{F}$ is called: hereditary if $G\in \mathfrak{F}$ and $H\leq G$, then $H\in \mathfrak{F}$; saturated if $G/N\in \mathfrak{F}$ and $N\leq \Phi(G)$, then $G\in \mathfrak{F}$; solubly saturated if $G/N\in \mathfrak{F}$ and $N\leq \Phi(G_\mathfrak{S})$, then $G\in \mathfrak{F}$. The formation product of $\mathfrak{F}_1$ with $\mathfrak{F}_2$ is defined as $\mathfrak{F}_1\mathfrak{F}_2=(G\mid G^{\mathfrak{F}_2}\in \mathfrak{F}_1)$.

Recall that a minimal non-$\mathfrak{F}$-group for a class of groups $\mathfrak{F}$ is a group $G\not \in\mathfrak{F}$, all of whose proper subgroups belong to $\mathfrak{F}$; a Schmidt group is a minimal non-nilpotent group; a formation $\mathfrak{F}$ is called a $\check{S}$-formation if every minimal non-$\mathfrak{F}$-group is either a Schmidt group or a group of prime order (see \cite[$\S 6.4$]{BallesterBolinches2006}).

\begin{theorem}[(D. W. Barnes and O. H. Kegel {\cite[Chapter IV, Proposition 1.5]{Doerk1992}})] \label{BK} Let $\mathfrak{F}$ be a formation, $R/S$ be a normal section of a group $G\in \mathfrak{F}$ and $K$ be a normal subgroup of $G$ contained in $C_G(R/S)$. With respect to the following action of $G/K$ on $R/S$: $(rS)^{gK}=g^{-1}rgS$, $r\in R, g\in G$, form the semidirect product $H=R/S\rtimes G/K$. Then $H\in \mathfrak{F}$. 
\end{theorem}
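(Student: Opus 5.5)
The plan is to realize $H=R/S\rtimes G/K$ as a quotient of a group which is a subdirect product of two copies of $G$. Since $\mathfrak{F}$ is a formation, and so is closed under subdirect products and under epimorphic images, this puts $H$ in $\mathfrak{F}$.

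\emph{Step 1.} Let $G$ act on $R$ by conjugation and form the external semidirect product $X=R\rtimes G$. Its elements are pairs $(r,g)$ with $(r_1,g_1)(r_2,g_2)=(r_1r_2^{g_1^{-1}},g_1g_2)$, or whichever convention is consistent with the action $(rS)^{gK}=g^{-1}rgS$ in the statement. I will check that the two maps below are homomorphisms.
\begin{itemize}
\item $\alpha\colon X\to G$, $(r,g)\mapsto g$, is the canonical projection. Its kernel is the copy $\bar R=\{(r,1)\}$ of $R$.
\item $\beta\colon X\to G$, $(r,g)\mapsto rg$, is the multiplication map. It is a homomorphism precisely because the action is conjugation inside $G$. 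It is surjective with kernel $D=\{(r^{-1},r)\mid r\in R\}$.
\end{itemize}
If $(r,g)\in \bar R\cap D$ then $g=1$ and $r=1$, so $\bar R\cap D=1$. Hence $X$ embeds as a subdirect product in $X/\bar R\times X/D\simeq G\times G$. Since $G\in\mathfrak{F}$ and $\mathfrak{F}$ is a formation, $X\in\mathfrak{F}$.

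\emph{Step 2.} Since $S\unlhd G$ and $S\le R$, the set $\bar S=\{(s,1)\mid s\in S\}$ is normal in $X$: it is normal in $\bar R$ and invariant under the action of $G$. Moreover $X/\bar S\simeq R/S\rtimes G$ with the induced action, and $X/\bar S\in\mathfrak{F}$.

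\emph{Step 3.} It remains to factor out $K$. Let $\bar K$ be the image of $\{(1,k)\mid k\in K\}$ in $X/\bar S$. I will show $\bar K$ is normal.
\begin{itemize}
\item Conjugation by elements $(1,g)$ preserves $\bar K$, since $K\unlhd G$.
\item Conjugating $(1,k)$ by $(r,1)$ gives $(1,k)$ multiplied by an element of the form $(r^{-1}r^{k^{\pm1}},1)$. Because $K\le C_G(R/S)$, this factor lies in $\bar S$.
\end{itemize}
So $\bar K\unlhd X/\bar S$. Its quotient is $R/S\rtimes G/K$ with exactly the action in the statement, that is, $H$. Hence $H\in\mathfrak{F}$ as an epimorphic image of an $\mathfrak{F}$-group.

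The main obstacle is bookkeeping rather than ideas. The semidirect product convention must be fixed so that $\beta$ is genuinely a homomorphism, and the computation in Step 3 must be done carefully, since it is the only place where the hypothesis $K\le C_G(R/S)$ enters.
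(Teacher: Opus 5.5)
Your proof is correct. The paper gives no proof of this statement; it only quotes it from Doerk--Hawkes. Your argument is the classical Barnes--Kegel one: $R\rtimes G$ is a subdirect product of two copies of $G$ via the projection and the multiplication map, whose kernels $\bar R$ and $D=\{(r^{-1},r)\mid r\in R\}$ meet trivially, and $H$ is its quotient by $\bar S\{(1,k)\mid k\in K\}$.

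One cosmetic point: the sign bookkeeping in Step 3 can be skipped by writing down the epimorphism $(r,g)\mapsto(rS,gK)$ onto $H$ directly. It is a homomorphism because $K\le C_G(R/S)$ makes the action of $G/K$ on $R/S$ well defined, and its kernel is $\{(s,k)\mid s\in S,\ k\in K\}$.
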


\begin{lemma}[{\cite[Lemma 6.1.6]{BallesterBolinches2006}}]\label{lem1}
  Let $\mathfrak{F}$ be a formation, $G$ be a group, $H,R\leq G$ and $N\unlhd G$. Then:
  
  $(1)$ If $H/N$ $\mathfrak{F}$-{\rm sn} $G/N$, then $H$ $\mathfrak{F}$-{\rm sn} $G$.
  
  $(2)$ If $H$ $\mathfrak{F}$-{\rm sn} $G$, then $HN/N$ $\mathfrak{F}$-{\rm sn} $G/N$.
  
  $(3)$ If $H$ $\mathfrak{F}$-{\rm sn} $R$ and $R$ $\mathfrak{F}$-{\rm sn} $G$, then $H$ $\mathfrak{F}$-{\rm sn} $G$.
\end{lemma}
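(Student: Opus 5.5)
This is Lemma 6.1.6 of Ballester-Bolinches and Ezquerro, quoted from \cite{BallesterBolinches2006}. I would prove it straight from the definition of $\mathfrak{F}$-subnormality via maximal chains $H=H_0\subset\ldots\subset H_n=G$ with $H_i/Core_{H_i}(H_{i-1})\in\mathfrak{F}$. The two facts I will use repeatedly are that subgroups of $G/N$ correspond bijectively to subgroups of $G$ containing $N$, and that $\mathfrak{F}$ is closed under homomorphic images.

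For $(1)$, I take a maximal chain $H/N=H_0/N\subset\ldots\subset H_n/N=G/N$ witnessing that $H/N$ is $\mathfrak{F}$-subnormal in $G/N$. By the correspondence theorem, $H=H_0\subset\ldots\subset H_n=G$ is a maximal chain in $G$. Since $N\le H_{i-1}$, we have $Core_{H_i/N}(H_{i-1}/N)=Core_{H_i}(H_{i-1})/N$. Hence
$$H_i/Core_{H_i}(H_{i-1})\simeq (H_i/N)/Core_{H_i/N}(H_{i-1}/N)\in\mathfrak{F}.$$

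For $(3)$, I concatenate a witnessing chain from $H$ to $R$ with one from $R$ to $G$. The result is a maximal chain from $H$ to $G$ in which every step satisfies the defining condition. The trivial cases $H=R$ or $R=G$ just drop one of the chains.

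The main work is $(2)$. I start from a maximal chain $H=H_0\subset\ldots\subset H_n=G$ and pass to $H_0N\subseteq H_1N\subseteq\ldots\subseteq H_nN=G$. For each $i$, either $H_{i-1}N=H_iN$, or $H_{i-1}N\neq H_iN$; in the second case $N\not\le H_{i-1}$ is irrelevant, and what matters is that $H_{i-1}N\cap H_i$ lies between $H_{i-1}$ and $H_i$. By maximality of $H_{i-1}$ in $H_i$, this forces $H_{i-1}N\cap H_i=H_{i-1}$. Now $H_iN=H_i(H_{i-1}N)$, so the subgroups between $H_{i-1}N$ and $H_iN$ correspond to subgroups between $H_{i-1}N\cap H_i=H_{i-1}$ and $H_i$ via the Dedekind law: $L\mapsto L\cap H_i$, with inverse $M\mapsto MN$. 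Hence $H_{i-1}N$ is maximal in $H_iN$.

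After deleting repeated terms I therefore obtain a maximal chain from $HN$ to $G$, and it remains to check the quotient condition at each strict step. Put $C=Core_{H_i}(H_{i-1})$. Then $CN$ is normalized by $H_i$ and by $N$, hence is normal in $H_iN$ and contained in $H_{i-1}N$. So $CN\le Core_{H_iN}(H_{i-1}N)$. Therefore $H_iN/Core_{H_iN}(H_{i-1}N)$ is an image of
$$H_iN/CN\simeq H_i/(H_i\cap CN)=H_i/C(H_i\cap N),$$
which is itself an image of $H_i/C\in\mathfrak{F}$, and so lies in $\mathfrak{F}$.

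Finally, applying the argument of $(1)$ in reverse transfers this chain and its quotients to $G/N$: the chain $HN/N\subseteq\ldots\subseteq G/N$ is maximal, and its core quotients are isomorphic to those just computed.

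The expected obstacle is exactly this maximality-after-adding-$N$ step in $(2)$. The Dedekind-law correspondence handles it, and the rest of the lemma is routine.
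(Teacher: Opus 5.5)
The paper does not prove this lemma. It imports it from Lemma 6.1.6 of Ballester-Bolinches and Ezquerro, so there is no internal argument to compare yours against. Your proof is correct and complete.

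Parts $(1)$ and $(3)$ work exactly as you describe.

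In $(2)$, injectivity of $L\mapsto L\cap H_i$ already suffices. Map the subgroups between $H_{i-1}N$ and $H_iN$ into those between $H_{i-1}$ and $H_i$; by Dedekind, $N\le L\le H_iN$ gives $L=(L\cap H_i)N$. This alone shows that either $H_{i-1}N=H_iN$ or $H_{i-1}N$ is maximal in $H_iN$.

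The equality $H_{i-1}N\cap H_i=H_{i-1}$ is only needed for the surjectivity you also claim. It gives $N\cap H_i\le H_{i-1}$, hence $MN\cap H_i=M(N\cap H_i)=M$. To get that equality you tacitly use $H_{i-1}N\neq H_iN$ to exclude $H_{i-1}N\cap H_i=H_i$; you should say so explicitly.

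The core estimate $CN\le Core_{H_iN}(H_{i-1}N)$ is right, and so is the resulting epimorphism $H_i/C\twoheadrightarrow H_iN/Core_{H_iN}(H_{i-1}N)$.

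For comparison, the usual route to $(2)$ is induction on the chain length. A single $\mathfrak{F}$-normal maximal step either contains $N$, and you use the correspondence as in $(1)$, or satisfies $HN=G$. For longer chains one applies induction inside $H_{n-1}$ with the normal subgroup $H_{n-1}\cap N$. One then transports the result along $H_{n-1}/(H_{n-1}\cap N)\simeq H_{n-1}N/N$ and concludes with $(3)$.

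Your direct construction avoids both the induction and the isomorphism transport. The price is the explicit interval argument at each step. Both approaches are equally elementary.
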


The next result directly follows from Lemma~\ref{lem1}:

\begin{lemma}\label{lem123}
  Let $\mathfrak{F}$ be a formation, $G$ be a group, $H,R\leq G$ and $N\unlhd G$. If $H$ $\mathfrak{F}$-{\rm sn} $R$, then $HN$ $\mathfrak{F}$-{\rm sn} $RN$.
\end{lemma}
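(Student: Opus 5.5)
The plan is to combine the three parts of Lemma~\ref{lem1}. Assume $H$ $\mathfrak{F}$-sn $R$; we want to show that $HN$ $\mathfrak{F}$-sn $RN$. Put $\bar R=RN$. Then $N$ is normal in $\bar R$, and $H\le R\le \bar R$. The argument has two steps.

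First, we show that $R$ is $\mathfrak{F}$-subnormal in $RN$. By the correspondence theorem, it suffices by Lemma~\ref{lem1}(1) that $RN/N$ is $\mathfrak{F}$-subnormal in $RN/N$. This holds trivially, because a subgroup is always $\mathfrak{F}$-subnormal in itself (the case $H=G$ of the definition). Lemma~\ref{lem1}(1), applied in the group $RN$ with normal subgroup $N$, then gives that $RN$ is $\mathfrak{F}$-subnormal in $RN$. That statement is the trivial one, so this route does not give what is needed. We need a different idea: apply Lemma~\ref{lem1}(2) inside the group $RN$.

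Second, view $H$ as a subgroup of $RN$. From $H$ $\mathfrak{F}$-sn $R$ we want to get $H$ $\mathfrak{F}$-sn $RN$, but $R$ need not be $\mathfrak{F}$-subnormal in $RN$. A cleaner route is to work in $R$ itself with the normal subgroup $N\cap R$, though $N$ is not contained in $R$. So proceed as follows. Apply Lemma~\ref{lem1}(2) in $R$ with $N\cap R\unlhd R$ to get
\[
H(N\cap R)/(N\cap R)\ \mathfrak{F}\text{-sn}\ R/(N\cap R).
\]
Under the isomorphism $R/(N\cap R)\simeq RN/N$, given by $x(N\cap R)\mapsto xN$, the subgroup $H(N\cap R)/(N\cap R)$ maps onto $HN/N$. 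The property of being $\mathfrak{F}$-subnormal is preserved by isomorphisms, since maximal chains and cores correspond and $\mathfrak{F}$ is a class closed under isomorphism. Hence $HN/N$ is $\mathfrak{F}$-sn in $RN/N$. Finally, Lemma~\ref{lem1}(1), applied in $RN$ with $N\unlhd RN$, yields $HN$ $\mathfrak{F}$-sn $RN$.

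The only point needing care is transporting $\mathfrak{F}$-subnormality along the isomorphism $R/(N\cap R)\simeq RN/N$ and checking that the image of $H(N\cap R)/(N\cap R)$ is exactly $HN/N$. Both are routine, so no real obstacle is expected. Lemma~\ref{lem1}(3) is not needed.
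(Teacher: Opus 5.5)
Your final argument is correct. It is the derivation the paper has in mind when it says the lemma follows directly from Lemma~\ref{lem1}: apply Lemma~\ref{lem1}(2) in $R$ with $R\cap N\unlhd R$, transport along $R/(R\cap N)\simeq RN/N$ (valid because $\mathfrak{F}$-subnormality is invariant under isomorphisms), and then apply Lemma~\ref{lem1}(1) in $RN$ with $N\unlhd RN$. In a write-up, delete the aborted first step, the attempt to show $R$ $\mathfrak{F}$-sn $RN$, which is not true in general and is not needed.
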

  
Recall \cite{Kamornikov2003, BallesterBolinches2006} that a subgroup functor is a function $f$ which assigns to each group $G$ a possibly empty set $f(G)$ of subgroups of $G$ satisfying $\theta(f(G))=f(\theta(G))$ for any isomorphism $\theta: G\rightarrow G^*$. The function $s_{n\mathfrak{F}}$ that assigns to each group the set of its $\mathfrak{F}$-subnormal subgroups is a subgroup functor \cite{Kamornikov2003, BallesterBolinches2006}. The next result directly follows from this fact.
  
\begin{lemma}\label{lem111} Let $G$ be a group and $H,R\leq G$. If $\mathfrak{F}$ is non-empty formation and $H$ $\mathfrak{F}$-{\rm sn} $R$, then $H^{\alpha}$ $\mathfrak{F}$-{\rm sn} $R^{\alpha}$ for all $\alpha\in \mathrm{Aut}(G)$.
\end{lemma}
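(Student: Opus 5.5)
The plan is to use the fact stated just before the lemma: the assignment $s_{n\mathfrak{F}}$, which sends each group to the set of its $\mathfrak{F}$-subnormal subgroups, is a subgroup functor. It is therefore compatible with isomorphisms. We then apply this to the isomorphism $R\to R^{\alpha}$ obtained by restricting $\alpha$.

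Concretely, let $\alpha\in\mathrm{Aut}(G)$ and put $\theta=\alpha|_R\colon R\to R^{\alpha}$. This is an isomorphism of groups, and $\theta(H)=H^{\alpha}$. Since $H$ $\mathfrak{F}$-sn $R$, we have $H\in s_{n\mathfrak{F}}(R)$. The functor property gives $\theta(s_{n\mathfrak{F}}(R))=s_{n\mathfrak{F}}(R^{\alpha})$, so $H^{\alpha}\in s_{n\mathfrak{F}}(R^{\alpha})$, which is the claim.

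For completeness I would also check the functor property directly from \cite[Definition 6.1.2]{BallesterBolinches2006}. If $H=R$, then $H^{\alpha}=R^{\alpha}$ and there is nothing to prove. Otherwise take a maximal chain $H=H_0\subset H_1\subset\ldots\subset H_n=R$ with $H_i/Core_{H_i}(H_{i-1})\in\mathfrak{F}$, and apply $\theta$ to get $H^{\alpha}=H_0^{\alpha}\subset\ldots\subset H_n^{\alpha}=R^{\alpha}$.

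\begin{itemize}
\item Maximality of each inclusion is preserved, because $\theta$ induces a lattice isomorphism between the subgroups of $H_i$ and those of $H_i^{\alpha}$.
\item Cores are preserved: $Core_{H_i^{\alpha}}(H_{i-1}^{\alpha})=(Core_{H_i}(H_{i-1}))^{\alpha}$, since $\theta$ maps normal subgroups of $H_i$ to normal subgroups of $H_i^{\alpha}$.
\item Hence $H_i^{\alpha}/Core_{H_i^{\alpha}}(H_{i-1}^{\alpha})\simeq H_i/Core_{H_i}(H_{i-1})$. The left side lies in $\mathfrak{F}$ because $\mathfrak{F}$ is a class of groups and so closed under isomorphism.
\end{itemize}

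The argument is routine. The only points needing care are that cores and maximality transfer under the isomorphism, and that $\mathfrak{F}$ is isomorphism-closed.
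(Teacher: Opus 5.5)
Your proof is correct and takes the same route as the paper: the paper says only that the lemma follows directly from $s_{n\mathfrak{F}}$ being a subgroup functor, and you make this explicit by applying that property to the isomorphism $\alpha|_R\colon R\to R^{\alpha}$. Your additional check, transporting a maximal chain together with its cores through $\alpha$, verifies the functor property that the paper only cites.
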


\begin{lemma}[{\cite[Lemma 6.1.7]{BallesterBolinches2006}}]\label{lem11}
  Let $\mathfrak{F}$ be a hereditary formation, $G$ be a group and $H,R\leq G$. If $H$ $\mathfrak{F}$-{\rm sn} $G$, then $H\cap R$ $\mathfrak{F}$-{\rm sn} $R$.
\end{lemma}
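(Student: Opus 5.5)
The plan is induction on $|G|$, using the transitivity of $\mathfrak{F}$-subnormality from Lemma~\ref{lem1}(3). If $H=G$, then $H\cap R=R$ and there is nothing to prove. Otherwise, by definition there is a maximal chain $H=H_0\subset H_1\subset\ldots\subset H_n=G$ with $n\geq 1$ and $H_i/Core_{H_i}(H_{i-1})\in\mathfrak{F}$ for all $i$. Put $M=H_{n-1}$, a maximal subgroup of $G$, and $K=Core_G(M)$, so that $G/K\in\mathfrak{F}$. The truncated chain $H_0\subset\ldots\subset H_{n-1}=M$ shows that $H$ $\mathfrak{F}$-sn $M$.

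Since $|M|<|G|$, the induction hypothesis applied to the group $M$ and its subgroup $R\cap M$ gives
$$H\cap(R\cap M)=H\cap R\ \ \mathfrak{F}\text{-sn}\ \ R\cap M.$$
By Lemma~\ref{lem1}(3), it then suffices to show that $R\cap M$ $\mathfrak{F}$-sn $R$.

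For this we use an auxiliary observation. \emph{If $N\unlhd R$ with $R/N\in\mathfrak{F}$, then every subgroup $L$ with $N\leq L\leq R$ is $\mathfrak{F}$-subnormal in $R$.} To see this, take any maximal chain $L=L_0\subset L_1\subset\ldots\subset L_m=R$. Each $L_i$ contains $N$, and $N\unlhd L_i$, so $N\leq Core_{L_i}(L_{i-1})$. Because $\mathfrak{F}$ is hereditary, $L_i/N\leq R/N$ lies in $\mathfrak{F}$. Since $\mathfrak{F}$ is a homomorph, $L_i/Core_{L_i}(L_{i-1})$, being a quotient of $L_i/N$, lies in $\mathfrak{F}$ as well. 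So this chain witnesses $L$ $\mathfrak{F}$-sn $R$.

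We apply the observation with $N=R\cap K$, which is normal in $R$. We have $R/(R\cap K)\simeq RK/K\leq G/K\in\mathfrak{F}$, so heredity gives $R/(R\cap K)\in\mathfrak{F}$. Moreover $R\cap K\leq R\cap M\leq R$, hence $R\cap M$ $\mathfrak{F}$-sn $R$, which completes the induction.

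The only delicate point is the auxiliary observation, specifically the need to check the condition along \emph{some} maximal chain. Any maximal chain works there, because every member contains $N$ and heredity applies at each step. The rest is routine bookkeeping with the definition of $\mathfrak{F}$-subnormality.
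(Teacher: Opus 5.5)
Your proof is correct. The paper gives no proof of its own and only cites Ballester-Bolinches and Ezquerro; your argument is essentially the standard one. It combines three ingredients: induction on $|G|$ through the top maximal subgroup $M$ of the chain (so $G/Core_G(M)\in\mathfrak{F}$), the observation that any subgroup of $R$ containing a normal subgroup $N$ with $R/N\in\mathfrak{F}$ (equivalently, containing $R^{\mathfrak{F}}$) is $\mathfrak{F}$-subnormal in $R$, and transitivity from Lemma~\ref{lem1}(3).
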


A function $f$, which assigns to every prime a formation, is called a formation function. Recall \cite[Chapter IV, Definition 3.1]{Doerk1992} that a formation $\mathfrak{F}$ is called local~if $$\mathfrak{F}=(G \mid G/C_G(H/K) \in f(p) \, \text{for every} \, p\in \pi(H/K)\, \text{and chief factor} \, H/K \, \text{of} \, G)$$ for some formation function $f$. In this case $f$ is called a local definition of $\mathfrak{F}$. According to Gash$\mathrm{\ddot{u}}$ts -- Lubeseder -- Schmid theorem \cite[Chapter IV, Theorem~4.6]{Doerk1992} a non-empty formation is saturated iff it is local. 

Recall \cite[Theorem 1]{Murashka2022a} that $Z\mathfrak{F}=(G \mid G=\mathrm{Z}_{\mathfrak{F}}(G))$ is a $Z$-saturated formation for a class $\mathfrak{F}$ of groups. From this fact and from \cite[Lemma 2.4]{Aivazidis2021} the following result is immediately obtained:

\begin{lemma}\label{coroll2}
Let $\mathfrak{F}$ be a hereditary formation. Then $Z\mathfrak{F}$ is  a hereditary $Z$-saturated formation.
\end{lemma}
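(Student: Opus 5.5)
The statement to prove is Lemma \ref{coroll2}: for a hereditary formation $\mathfrak{F}$, the class $Z\mathfrak{F}$ is a hereditary $Z$-saturated formation. I would treat the two claims separately, since the paper cites a different source for each.

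\textbf{$Z$-saturation.} This is exactly \cite[Theorem 1]{Murashka2022a} applied to the class $\mathfrak{F}$. That result says $Z\mathfrak{F}=(G\mid G=\mathrm{Z}_{\mathfrak{F}}(G))$ is a $Z$-saturated formation for any class $\mathfrak{F}$, so it applies in particular to our hereditary formation. Nothing further is needed here.

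\textbf{Heredity.} Let $G\in Z\mathfrak{F}$ and $H\leq G$; I will show $H=\mathrm{Z}_{\mathfrak{F}}(H)$. This is the content of \cite[Lemma 2.4]{Aivazidis2021}, and I would reprove it directly as follows.

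Take a chief series $1=G_0<G_1<\dots<G_n=G$ of $G$. Every factor $G_i/G_{i-1}$ is $\mathfrak{F}$-central in $G$. Intersecting with $H$ gives a normal series $H\cap G_i$ of $H$, and I refine it to a chief series of $H$. Each chief factor $A/B$ of $H$ obtained this way lies in some section $(H\cap G_i)/(H\cap G_{i-1})$. That section embeds $H$-equivariantly in $G_i/G_{i-1}$, so $A/B$ is an $H$-chief factor inside an $H$-invariant section $R/S$ of the $G$-chief factor $G_i/G_{i-1}$.

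Write $X=G_i/G_{i-1}\rtimes G/C_G(G_i/G_{i-1})$; by hypothesis $X\in\mathfrak{F}$. The key step is to show $A/B\rtimes H/C_H(A/B)\in\mathfrak{F}$. The group $(R/G_{i-1})\rtimes HC/C$, with $C=C_G(G_i/G_{i-1})$, is a subgroup of $X$, so it lies in $\mathfrak{F}$ because $\mathfrak{F}$ is hereditary. Inside this group, $A/B$ becomes a normal section, and the corresponding $C_H(A/B)C/C$ centralizes it. Applying Theorem \ref{BK} (Barnes--Kegel) to this $\mathfrak{F}$-group, its normal section $A/B$ and the centralizing normal subgroup, the semidirect product of $A/B$ with $(HC/C)/(C_H(A/B)C/C)\cong H/C_H(A/B)$ lies in $\mathfrak{F}$.

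The main obstacle is exactly this bookkeeping: making sure that $A/B$ really is a normal section of the semidirect product and that the kernel is $C_H(A/B)C/C$. This needs $H\cap C\leq C_H(A/B)$, which holds because $C$ centralizes all of $G_i/G_{i-1}$. Once that is checked, every chief factor of $H$ is $\mathfrak{F}$-central, so $H=\mathrm{Z}_{\mathfrak{F}}(H)$ and $H\in Z\mathfrak{F}$. Together with the first part, this proves the lemma.
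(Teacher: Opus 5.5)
\textbf{Where the proposal stands.} The paper gives no argument of its own: it quotes the external theorem that $Z\mathfrak{F}$ is a $Z$-saturated formation and the external lemma for heredity. Your first part is the same citation. Reproving heredity via Theorem~\ref{BK} is a sensible self-contained alternative, but the step you flag as the main obstacle is applied incorrectly.

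Put $R=AG_{i-1}$, $S=BG_{i-1}$, $\bar R=R/G_{i-1}$, $\bar S=S/G_{i-1}$ and $Y=\bar R\rtimes (HC/C)$. There are two problems.
\begin{enumerate}
\item Theorem~\ref{BK} needs $K\unlhd Y$, but $C_H(A/B)C/C$ lies in the complement. A subgroup $K$ of the complement is normal in $Y$ only if $[\bar R,K]\le \bar R\cap K=1$, i.e.\ only if $K$ centralizes all of $\bar R$, not merely $\bar R/\bar S$. This already fails for $G=S_4$, $G_i=V_4$, $H\in\mathrm{Syl}_2(G)$, $A=V_4$, $B=Z(H)$: there $Y\cong D_8$ and $C_H(A/B)C/C$ is a non-normal subgroup of order $2$.
\item Theorem~\ref{BK} yields $\bar R/\bar S\rtimes Y/K$, and $Y/K$ also contains the image of the base $\bar R$. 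That image acts on $\bar R/\bar S$ by inner automorphisms, nontrivially whenever $A/B$ is non-abelian. So even a correct choice of $K$ does not by itself produce the semidirect product with $(HC/C)/(C_H(A/B)C/C)$.
\end{enumerate}
Checking $H\cap C\le C_H(A/B)$ addresses neither issue.

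\textbf{The missing idea.} You need to show that the base contributes no new automorphisms.
\begin{enumerate}
\item Take $K=C_Y(\bar R/\bar S)$, which is normal in $Y$.
\item For $a\in A$, the elements $aG_{i-1}\in\bar R$ and $aC\in HC/C$ induce the same automorphism of $\bar R$: both send $yG_{i-1}$ to $a^{-1}yaG_{i-1}$. Hence $(aG_{i-1})(aC)^{-1}\in C_Y(\bar R)\le K$, and so $Y=K\,(HC/C)$. This is exactly where it matters that $\bar R$ is the image of the subgroup $A\le H$.
\item Therefore $Y/K\cong (HC/C)/(K\cap HC/C)$. Since $aB\mapsto aG_{i-1}\bar S$ is an $H$-isomorphism $A/B\to\bar R/\bar S$, we get $K\cap HC/C=C_H(A/B)C/C$.
\item Now $H\cap C\le C_H(A/B)$ and Dedekind's law give $(HC/C)/(C_H(A/B)C/C)\cong H/C_H(A/B)$, compatibly with the actions.
\item Theorem~\ref{BK} then gives $A/B\rtimes H/C_H(A/B)\in\mathfrak{F}$.
\end{enumerate}
With this repair your heredity argument goes through. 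You also need the routine Jordan--H\"older remark: $\mathfrak{F}$-centrality of the factors of one chief series of $H$ passes to all chief factors of $H$.
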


\section{The intersection of weak $\mathfrak{F}$-subnormalizers} 

R.~W.~Carter \cite{Carter1962} and C.~J.~Graddon \cite{Graddon1971} studied subnormalizers and $\mathfrak{F}$-subnormalizers respectively.   A subnormalizer or an $\mathfrak{F}$-subnormalizer may not exist in a group for a given subgroup. A.~Mann \cite{Mann1970} proposed the concept of a weak subnormalizer, which always exists, but may not be unique. In \cite{Murashka2022} the concept of a weak K-$\mathfrak{F}$-subnormalizer was proposed. By analogy we propose the following definition:

\begin{definition} We shall call a subgroup $T$ of $G$ a weak $\mathfrak{F}$-subnormalizer of $H$ in $G$ if $H$ $\mathfrak{F}$-sn $T$ and if $H$ $\mathfrak{F}$-sn $M\leq G$ and $T\leq M$, then $T=M$. \end{definition}

Let $\mathfrak{F}$ be a hereditary formation and $H\leq G$. From the definitions of $\mathfrak{F}$-subnormal subgroup and weak $\mathfrak{F}$-subnormalizer it follows that $H$ $\mathfrak{F}$-sn $G$ if and only if $G$ coincides with the weak $\mathfrak{F}$-subnormalizer of $H$ in $G$.

\begin{lemma}\label{lem2}
Let $\mathfrak{F}$ be a non-empty formation, $G$ be a group and $H\leq G$. If $T$ is a weak $\mathfrak{F}$-subnormalizer of $H$ in $G$, then $T^\alpha$ is a weak $\mathfrak{F}$-subnormalizer of $H^\alpha$ in $G$ for any $\alpha\in \mathrm{Aut}(G)$.
\end{lemma}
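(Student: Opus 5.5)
The plan is to verify the two defining conditions for $T^\alpha$ directly, using only that $\alpha$ is a bijection on the set of subgroups of $G$ which preserves inclusion in both directions, together with Lemma~\ref{lem111}.

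\emph{Step 1.} Since $T$ is a weak $\mathfrak{F}$-subnormalizer of $H$ in $G$, we have $H$ $\mathfrak{F}$-sn $T$. Applying Lemma~\ref{lem111} with $R=T$ gives $H^\alpha$ $\mathfrak{F}$-sn $T^\alpha$, which is the first condition.

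\emph{Step 2.} For maximality, let $M\leq G$ with $T^\alpha\leq M$ and $H^\alpha$ $\mathfrak{F}$-sn $M$. Since $\alpha^{-1}\in\mathrm{Aut}(G)$, Lemma~\ref{lem111} applied to $\alpha^{-1}$ gives $H=(H^\alpha)^{\alpha^{-1}}$ $\mathfrak{F}$-sn $M^{\alpha^{-1}}$. Because $\alpha^{-1}$ preserves inclusion, $T\leq M^{\alpha^{-1}}$. The maximality of $T$ then forces $T=M^{\alpha^{-1}}$, and applying $\alpha$ yields $T^\alpha=M$.

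Hence $T^\alpha$ satisfies both conditions and is a weak $\mathfrak{F}$-subnormalizer of $H^\alpha$ in $G$. There is no genuine obstacle here. The only point needing care is in Step 2: Lemma~\ref{lem111} is stated for an arbitrary automorphism, so it applies equally to $\alpha^{-1}$, and this is what transports the maximality condition back to $T$.
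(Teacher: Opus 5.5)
Your proof is correct and is essentially the paper's argument. The paper phrases it as a contradiction: it assumes some $M$ with $H^\alpha$ $\mathfrak{F}$-sn $M$ and $T^\alpha<M$, then pulls this back via $\alpha^{-1}$ using Lemma~\ref{lem111}. You instead verify the two defining conditions directly, with the same ingredients.
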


\begin{proof} Assume that $T$ is a weak $\mathfrak{F}$-subnormalizer of $H$ and $T^\alpha$ is not a weak $\mathfrak{F}$-subnormalizer of $H^\alpha$ for some $\alpha\in \mathrm{Aut}(G)$. Since $H^\alpha$ $\mathfrak{F}$-sn $T^\alpha$ by Lemma~\ref{lem111}, there exists $M\leq G$ such that $H^\alpha$ $\mathfrak{F}$-sn $M$ and $T^\alpha<M$. Then $H$ $\mathfrak{F}$-sn $M^{\alpha^{-1}}$ by Lemma~\ref{lem111} and $T<M^{\alpha^{-1}}$, a contradiction. Therefore $T^\alpha$ is a weak $\mathfrak{F}$-subnormalizer of $H^\alpha$ for any $\alpha\in \mathrm{Aut}(G)$. \end{proof}

Our main idea is to study $\mathrm{Int}_{\mathfrak{F}}(G)$ using the following subgroup: 

\begin{definition} Let $\mathfrak{F}$ be a formation, $\mathfrak{H}$ be a homomorph and $G$ be a group. By $\mathcal{I}_{\mathfrak{H}}^{\mathfrak{F}}(G)$ we shall denote the intersection of all weak $\mathfrak{F}$-subnormalizers of all $\mathfrak{H}$-subgroups of $G$. \end{definition}

\begin{lemma}\label{l7} Let $\mathfrak{H}$ be a saturated homomorph. If $A/N\in \mathfrak{H}$ and $B$ is the minimal supplement to $N$ in $A$, then $B\in \mathfrak{H}$ and $BN/N=A/N$.\end{lemma}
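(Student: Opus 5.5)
Since $B$ is a minimal supplement to $N$ in $A$, we have $BN=A$, so $BN/N=A/N$; this is the second claim. For the first claim we must show $B\in\mathfrak{H}$. The standard fact we use is that if $B$ is a minimal supplement to $N$ in $A$, then $B\cap N\leq \Phi(B)$.

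We prove this fact directly. Suppose $B\cap N\not\leq\Phi(B)$. Then some maximal subgroup $M$ of $B$ does not contain $B\cap N$. Hence $M(B\cap N)=B$. Now $B\cap N\unlhd B$, because $N\unlhd A$. Then
$$MN\supseteq M(B\cap N)N=BN=A,$$
so $M$ is a supplement to $N$ in $A$ properly contained in $B$. This contradicts the minimality of $B$. Therefore $B\cap N\leq\Phi(B)$.

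Next, by the isomorphism theorem,
$$B/(B\cap N)\simeq BN/N=A/N\in\mathfrak{H}.$$
Since $\mathfrak{H}$ is a class of groups, it is closed under isomorphism, so $B/(B\cap N)\in\mathfrak{H}$. Here $B\cap N$ is a normal subgroup of $B$ contained in $\Phi(B)$. Since $\mathfrak{H}$ is saturated, $B\in\mathfrak{H}$.

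No step is a serious obstacle. The only point needing care is the Frattini containment $B\cap N\leq\Phi(B)$. In that argument we must check that $B\cap N$ is normal in $B$, so that $M(B\cap N)$ is a subgroup, and that $MN=A$ really holds.
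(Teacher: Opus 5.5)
Your proof is correct and follows the paper's argument exactly: first $B\cap N\le\Phi(B)$, then $B/(B\cap N)\simeq BN/N=A/N\in\mathfrak{H}$, and saturation gives $B\in\mathfrak{H}$. The only difference is that you prove the Frattini containment directly, and your proof of it is sound, whereas the paper cites Doerk--Hawkes, Chapter A, Theorem 9.2 for it.
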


\begin{proof}
  Note that   $B\cap N\subseteq \Phi(B)$  by \cite[Chapter A, Theorem 9.2]{Doerk1992}. Since $A/N=BN/N\simeq B/(B\cap N)\in \mathfrak{H}$, we obtain that $B\in \mathfrak{H}$.
\end{proof}

\begin{theorem}\label{thm1}
Let $\mathfrak{F}$ be a hereditary formation, $\mathfrak{H}$ be a homomorph, $G$ be a group, $N\unlhd G$ and $L\leq G$. The following statements hold:

$(1)$ $\mathcal{I}_{\mathfrak{H}}^{\mathfrak{F}}(G)$ is the characteristic subgroup of $G$ and is the largest  subgroup among normal subgroups $N_1$ of $G$ with $H$ $\mathfrak{F}$-{\rm sn} $HN_1$ for any $\mathfrak{H}$-subgroup $H$ of $G$.

$(2)$ If $\mathfrak{H}$ is saturated, then $\mathcal{I}_{\mathfrak{H}}^{\mathfrak{F}}(L)N/N\leq \mathcal{I}_{\mathfrak{H}}^{\mathfrak{F}}(LN/N)$.

$(3)$ If $N\leq \mathcal{I}_{\mathfrak{H}}^{\mathfrak{F}}(G)$, then $\mathcal{I}_{\mathfrak{H}}^{\mathfrak{F}}(G/N)\leq \mathcal{I}_{\mathfrak{H}}^{\mathfrak{F}}(G)/N$, i.e. $\mathcal{I}_{\mathfrak{H}}^{\mathfrak{F}}(G/\mathcal{I}_{\mathfrak{H}}^{\mathfrak{F}}(G)) \simeq 1$. If $\mathfrak{H}$ is saturated, then $\mathcal{I}_{\mathfrak{H}}^{\mathfrak{F}}(G/N)= \mathcal{I}_{\mathfrak{H}}^{\mathfrak{F}}(G)/N$.

$(4)$ If $S\leq G$, then $\mathcal{I}_{\mathfrak{H}}^{\mathfrak{F}}(L)\cap S\leq \mathcal{I}_{\mathfrak{H}}^{\mathfrak{F}}(L\cap S)$.

$(5)$ $\mathcal{I}_{\mathfrak{H}}^{\mathfrak{F}}(\mathcal{I}_{\mathfrak{H}}^{\mathfrak{F}}(G))=\mathcal{I}_{\mathfrak{H}}^{\mathfrak{F}}(G)$.
\end{theorem}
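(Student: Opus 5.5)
The plan is to prove $(1)$ first and then use its characterization of $\mathcal{I}_{\mathfrak{H}}^{\mathfrak{F}}(G)$ as the key tool for $(2)$–$(5)$. Throughout, write $I=\mathcal{I}_{\mathfrak{H}}^{\mathfrak{F}}(G)$.

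\emph{Part $(1)$.} Any $\alpha\in\mathrm{Aut}(G)$ maps $\mathfrak{H}$-subgroups to $\mathfrak{H}$-subgroups, since $\mathfrak{H}$ is a class of groups. By Lemma~\ref{lem2}, $\alpha$ therefore permutes the set of all weak $\mathfrak{F}$-subnormalizers of all $\mathfrak{H}$-subgroups. Hence $I^\alpha=I$, so $I$ is characteristic and in particular normal.

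Now let $H$ be an $\mathfrak{H}$-subgroup and $T$ a weak $\mathfrak{F}$-subnormalizer of $H$. Then $H,I\leq T$, so $HI\leq T$. Applying Lemma~\ref{lem11} inside the group $T$ (where $H$ $\mathfrak{F}$-sn $T$) gives $H=H\cap HI$ $\mathfrak{F}$-sn $HI$.

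Conversely, let $N_1\unlhd G$ satisfy $H$ $\mathfrak{F}$-sn $HN_1$ for every $\mathfrak{H}$-subgroup $H$. Take a weak $\mathfrak{F}$-subnormalizer $T$ of $H$. From $H$ $\mathfrak{F}$-sn $T$, Lemma~\ref{lem123} gives $HN_1$ $\mathfrak{F}$-sn $TN_1$. By Lemma~\ref{lem1}(3), $H$ $\mathfrak{F}$-sn $TN_1$. Maximality of $T$ then forces $TN_1=T$, so $N_1\leq T$ for every such $T$, and hence $N_1\leq I$.

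\emph{Part $(2)$.} Put $J=\mathcal{I}_{\mathfrak{H}}^{\mathfrak{F}}(L)$. By $(1)$, $JN/N$ is normal in $LN/N$, so by $(1)$ applied in $LN/N$ it suffices to show $K/N$ $\mathfrak{F}$-sn $(K/N)(JN/N)$ for every $\mathfrak{H}$-subgroup $K/N$ of $LN/N$. Here $K=(K\cap L)N$ and $(K\cap L)/(K\cap L\cap N)\simeq K/N\in\mathfrak{H}$.

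Let $B$ be a minimal supplement to $K\cap L\cap N$ in $K\cap L$. By Lemma~\ref{l7}, $B\in\mathfrak{H}$ and $BN=K$. Since $B$ is an $\mathfrak{H}$-subgroup of $L$, part $(1)$ gives $B$ $\mathfrak{F}$-sn $BJ$. Lemma~\ref{lem123} then gives $K=BN$ $\mathfrak{F}$-sn $BJN=KJ$, and Lemma~\ref{lem1}(2) passes this to the quotient by $N$. I expect this step to be the main obstacle: one must lift an $\mathfrak{H}$-subgroup of the quotient to an $\mathfrak{H}$-subgroup of $L$, and this is exactly where saturation of $\mathfrak{H}$ (via Lemma~\ref{l7}) is used.

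\emph{Part $(3)$.} Write $\mathcal{I}_{\mathfrak{H}}^{\mathfrak{F}}(G/N)=J/N$. By $(1)$ it suffices to show $H$ $\mathfrak{F}$-sn $HJ$ for every $\mathfrak{H}$-subgroup $H$ of $G$. Since $\mathfrak{H}$ is a homomorph, $HN/N\in\mathfrak{H}$, so $HN/N$ $\mathfrak{F}$-sn $HJ/N$. Lemma~\ref{lem1}(1) gives $HN$ $\mathfrak{F}$-sn $HJ$. Since $N\leq I$, part $(1)$ gives $H$ $\mathfrak{F}$-sn $HN$. Transitivity (Lemma~\ref{lem1}(3)) yields $H$ $\mathfrak{F}$-sn $HJ$, so $J\leq I$. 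Taking $N=I$ gives $\mathcal{I}_{\mathfrak{H}}^{\mathfrak{F}}(G/I)\simeq 1$. If $\mathfrak{H}$ is saturated, part $(2)$ with $L=G$ gives the reverse inclusion, hence equality.

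\emph{Part $(4)$.} Put $J=\mathcal{I}_{\mathfrak{H}}^{\mathfrak{F}}(L)$. Then $J\cap S$ is normal in $L\cap S$. Let $H$ be an $\mathfrak{H}$-subgroup of $L\cap S$; by $(1)$, $H$ $\mathfrak{F}$-sn $HJ$. Since $H$ normalizes $J\cap S$, the set $R=H(J\cap S)$ is a subgroup of $HJ$. Lemma~\ref{lem11}, applied in $HJ$, gives $H=H\cap R$ $\mathfrak{F}$-sn $R$. By $(1)$ applied to $L\cap S$, we conclude $J\cap S\leq\mathcal{I}_{\mathfrak{H}}^{\mathfrak{F}}(L\cap S)$.

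\emph{Part $(5)$.} Apply $(4)$ with $L=G$ and $S=I$ to get $I\leq\mathcal{I}_{\mathfrak{H}}^{\mathfrak{F}}(I)$. The reverse inclusion is trivial, since $\mathcal{I}_{\mathfrak{H}}^{\mathfrak{F}}(X)\leq X$ for any group $X$.
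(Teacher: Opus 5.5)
Your proposal is correct and follows the paper's proof essentially step by step. The only differences are cosmetic: in $(2)$ you lift $K/N$ directly via a minimal supplement of $L\cap N$ in $K\cap L$, instead of first treating $L=G$ and then transporting along $L/(L\cap N)\simeq LN/N$, and in $(3)$ you finish with the characterization from $(1)$ instead of re-running the weak-subnormalizer argument. One small citation is missing: in $(3)$ the step ``$H$ $\mathfrak{F}$-sn $HN$'' does not follow from $(1)$ alone but needs Lemma~\ref{lem11} applied to $HN\leq HI$, exactly as you do in $(4)$.
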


\begin{proof} $(1)$ Let $\mathcal{K}$ be the set of all weak $\mathfrak{F}$-subnormalizers of $\mathfrak{H}$-subgroups of $G$ and $T\in \mathcal{K}$. Then $T$ is a weak $\mathfrak{F}$-subnormalizer of some $\mathfrak{H}$-subgroup $H$. Since $\mathfrak{H}$ is a class of group, $H^{\alpha}\in \mathfrak{H}$ for any $\alpha\in \mathrm{Aut}(G)$. Then $T^\alpha\in \mathcal{K}$ for any $\alpha\in \mathrm{Aut}(G)$ by Lemma~\ref{lem2}. Note that $$(\mathcal{I}_{\mathfrak{H}}^{\mathfrak{F}}(G))^{\alpha}=(\cap_{T\in \mathcal{K}}T)^{\alpha}=(\cap_{T\in \mathcal{K}}T^{\alpha^{-1}})^\alpha=\cap_{T\in \mathcal{K}}T^{\alpha^{-1}\alpha}=\cap_{T\in \mathcal{K}}T=\mathcal{I}_{\mathfrak{H}}^{\mathfrak{F}}(G)$$ for any $\alpha\in \mathrm{Aut}(G)$. Therefore $\mathcal{I}_{\mathfrak{H}}^{\mathfrak{F}}(G)$ is the characteristic subgroup of $G$.

Let $H$ be an $\mathfrak{H}$-subgroup of $G$ and $N_1$ be a normal subgroup of $G$ such that $H_1$ $\mathfrak{F}$-sn $H_1N_1$ for any $\mathfrak{H}$-subgroup $H_1$ of $G$. If $T$ is a weak $\mathfrak{F}$-subnormalizer of $H$ in $G$, then $HN_1$ $\mathfrak{F}$-sn $TN_1$ by Lemma~\ref{lem123}. Hence $H$ $\mathfrak{F}$-sn $TN_1$ by $(3)$ of Lemma~\ref{lem1}. So $TN_1=T$ by the definition of a weak $\mathfrak{F}$-subnormalizer. Thus $N_1\leq \mathcal{I}_{\mathfrak{H}}^{\mathfrak{F}}(G)$.

On the other hand, since $\mathfrak{F}$ is a hereditary formation and $H\mathcal{I}_{\mathfrak{H}}^{\mathfrak{F}}(G)$ is contained in every weak $\mathfrak{F}$-subnormalizer of $H$ in $G$, it follows that $H$ $\mathfrak{F}$-sn $H\mathcal{I}_{\mathfrak{H}}^{\mathfrak{F}}(G)$ by Lemma~\ref{lem11}. Then $\mathcal{I}_{\mathfrak{H}}^{\mathfrak{F}}(G)$ is the largest  subgroup among normal subgroups $N_1$ of $G$ with $H_1$ $\mathfrak{F}$-sn $H_1N_1$ for any $\mathfrak{H}$-subgroup $H_1$ of $G$. 

$(2)$ Let $L=G$ and $I=\mathcal{I}_{\mathfrak{H}}^{\mathfrak{F}}(G)$. For any $V/N\in\mathfrak{H}$ there exists  $L_1\in\mathfrak{H}$ with $L_1N=V$ by Lemma~\ref{l7}. So $L_1$ $\mathfrak{F}$-sn $L_1I$ by (1). Therefore $L_1N/N=V/N$ $\mathfrak{F}$-sn $VI/N$ by Lemma~\ref{lem1}. Hence $V/N$ $\mathfrak{F}$-sn $VI/N$ for any $V/N\in \mathfrak{H}$. Thus $\mathcal{I}_{\mathfrak{H}}^{\mathfrak{F}}(G)N/N\leq \mathcal{I}_{\mathfrak{H}}^{\mathfrak{F}}(G/N)$ by $(1)$.      
  
Now let $L$ be any subgroup of $G$ and let $f:L/(L\cap N)\rightarrow LN/N$ be the canonical isomorphism. Then $$f(\mathcal{I}_{\mathfrak{H}}^{\mathfrak{F}}(L/(L\cap N)))=\mathcal{I}_{\mathfrak{H}}^{\mathfrak{F}}(LN/N)\, \text{and}\, f(\mathcal{I}_{\mathfrak{H}}^{\mathfrak{F}}(L)(L\cap N)/(L\cap N))=\mathcal{I}_{\mathfrak{H}}^{\mathfrak{F}}(L)N/N$$ by $(1)$. From our previous results we get $$\mathcal{I}_{\mathfrak{H}}^{\mathfrak{F}}(L)(L\cap N)/(L\cap N)\leq \mathcal{I}_{\mathfrak{H}}^{\mathfrak{F}}(L/(L\cap N)).$$ Hence $\mathcal{I}_{\mathfrak{H}}^{\mathfrak{F}}(L)N/N\leq \mathcal{I}_{\mathfrak{H}}^{\mathfrak{F}}(LN/N)$. 

$(3)$ Let $N\leq \mathcal{I}_{\mathfrak{H}}^{\mathfrak{F}}(G)$ and $\overline{K}=\mathcal{I}_{\mathfrak{H}}^{\mathfrak{F}}(G/N)=K/N$. Note that $K\unlhd G$. Let $H\leq G$ and $H\in \mathfrak{H}$. Hence $\overline{H}=HN/N\simeq H/(H\cap N)\in \mathfrak{H}$. Let $\overline{T}$ be a weak $\mathfrak{F}$-subnormalizer of $\overline{H}$ in $G/N$, i.e. $\overline{H}$ $\mathfrak{F}$-sn $\overline{T}$. From $\overline{H}< \overline{KH}\leq \overline{T}$ it follows that $\overline{H}=HN/N$ $\mathfrak{F}$-sn $\overline{KN}=HK/N$. So $HN$ $\mathfrak{F}$-sn $HK$ by $(2)$ of Lemma \ref{lem1}. Since $H$ $\mathfrak{F}$-sn $HN$ by Lemma \ref{lem11}, we obtain that $H$ $\mathfrak{F}$-sn $HK$ by $(3)$ of Lemma~\ref{lem1}. Let $T_1$ be a weak $\mathfrak{F}$-subnormalizer of $H$ in $G$. Then $HK$ $\mathfrak{F}$-sn $T_1K$ by Lemma~\ref{lem123}. Since $H$ $\mathfrak{F}$-sn $HK$ and $HK$ $\mathfrak{F}$-sn $T_1K$,  $H$ $\mathfrak{F}$-sn $T_1K$ by $(3)$ of Lemma~\ref{lem1}. So $T_1K=T_1$ by the definition of weak $\mathfrak{F}$-subnormalizer. Therefore $K$ lies in all weak $\mathfrak{F}$-subnormalizers of all $\mathfrak{H}$-subgroups of $G$. Hence $K\leq I$, i.e. $\mathcal{I}_{\mathfrak{H}}^{\mathfrak{F}}(G/N)\leq \mathcal{I}_{\mathfrak{H}}^{\mathfrak{F}}(G)/N$. 

If $N=\mathcal{I}_{\mathfrak{H}}^{\mathfrak{F}}(G)$, then $N/N\leq \mathcal{I}_{\mathfrak{H}}^{\mathfrak{F}}(G/N)\leq N/N$. So $\mathcal{I}_{\mathfrak{H}}^{\mathfrak{F}}(G/\mathcal{I}_{\mathfrak{H}}^{\mathfrak{F}}(G)) \simeq 1$. 

If $\mathfrak{H}$ is a saturated homomorph, then $\mathcal{I}_{\mathfrak{H}}^{\mathfrak{F}}(G)/N\leq \mathcal{I}_{\mathfrak{H}}^{\mathfrak{F}}(G/N)$ by $(2)$. Thus $\mathcal{I}_{\mathfrak{H}}^{\mathfrak{F}}(G)/N= \mathcal{I}_{\mathfrak{H}}^{\mathfrak{F}}(G/N)$. 

$(4)$ Note that $K$ $\mathfrak{F}$-sn $K\mathcal{I}_{\mathfrak{H}}^{\mathfrak{F}}(L)$ for any $\mathfrak{H}$-subgroup $K$ of $L\cap S$ by $(1)$ and $K(\mathcal{I}_{\mathfrak{H}}^{\mathfrak{F}}(L)\cap S)\leq K\mathcal{I}_{\mathfrak{H}}^{\mathfrak{F}}(L)$. So $K$ $\mathfrak{F}$-sn $K(\mathcal{I}_{\mathfrak{H}}^{\mathfrak{F}}(L)\cap S)$ for any $\mathfrak{H}$-subgroup $K$ of $L\cap S$. Thus $\mathcal{I}_{\mathfrak{H}}^{\mathfrak{F}}(L)\cap S\leq \mathcal{I}_{\mathfrak{H}}^{\mathfrak{F}}(L\cap S)$ by $(1)$.

$(5)$ From $(4)$ it follows that $$\mathcal{I}_{\mathfrak{H}}^{\mathfrak{F}}(\mathcal{I}_{\mathfrak{H}}^{\mathfrak{F}}(G))\leq \mathcal{I}_{\mathfrak{H}}^{\mathfrak{F}}(G)=\mathcal{I}_{\mathfrak{H}}^{\mathfrak{F}}(G)\cap \mathcal{I}_{\mathfrak{H}}^{\mathfrak{F}}(G)\leq \mathcal{I}_{\mathfrak{H}}^{\mathfrak{F}}(G\cap \mathcal{I}_{\mathfrak{H}}^{\mathfrak{F}}(G))=\mathcal{I}_{\mathfrak{H}}^{\mathfrak{F}}(\mathcal{I}_{\mathfrak{H}}^{\mathfrak{F}}(G)).$$ Thus $\mathcal{I}_{\mathfrak{H}}^{\mathfrak{F}}(\mathcal{I}_{\mathfrak{H}}^{\mathfrak{F}}(G)$)$=\mathcal{I}_{\mathfrak{H}}^{\mathfrak{F}}(G)$. \end{proof}

The key idea in the proof of the main results of the work is:

\begin{theorem}\label{thm2} Let $\mathfrak{H}$ be a homomorph, $\mathfrak{F}$ be a hereditary formation, $\mathfrak{H}\subseteq \mathfrak{F}$ and $G$ be a group. Then $\mathcal{I}_{\mathfrak{H}}^{\mathfrak{F}}(G)=\mathrm{Int}_{\mathfrak{F}}(G)$ holds for every group $G$ if and only if $\mathfrak{F}=f_{\mathfrak{H}}(\mathfrak{F})$. \end{theorem}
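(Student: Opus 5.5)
The plan is to prove the two implications separately. Throughout I will use two facts. First, a hereditary formation satisfies $\mathfrak{F}\subseteq f_{\mathfrak{H}}(\mathfrak{F})$. Second, $\mathrm{Int}_{\mathfrak{F}}(G)$ is normal in $G$.

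For the first fact, let $G\in\mathfrak{F}$. Every subgroup $H_i$ in any maximal chain ending at $G$ lies in $\mathfrak{F}$, so $H_i/Core_{H_i}(H_{i-1})\in\mathfrak{F}$. Hence every subgroup of an $\mathfrak{F}$-group is $\mathfrak{F}$-subnormal in it, and in particular every $\mathfrak{H}$-subgroup is. For the second fact, the set of $\mathfrak{F}$-maximal subgroups is closed under conjugation.

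\emph{Necessity.} Assume $\mathcal{I}_{\mathfrak{H}}^{\mathfrak{F}}(G)=\mathrm{Int}_{\mathfrak{F}}(G)$ for all $G$. By the first fact it suffices to show $f_{\mathfrak{H}}(\mathfrak{F})\subseteq\mathfrak{F}$. Take $G\in f_{\mathfrak{H}}(\mathfrak{F})$.
\begin{enumerate}
\item Every $\mathfrak{H}$-subgroup $H$ of $G$ satisfies $H$ $\mathfrak{F}$-sn $G$.
\item Hence the only weak $\mathfrak{F}$-subnormalizer of such $H$ is $G$ itself, and so $\mathcal{I}_{\mathfrak{H}}^{\mathfrak{F}}(G)=G$.
\item By hypothesis $\mathrm{Int}_{\mathfrak{F}}(G)=G$. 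Thus $G$ lies in an $\mathfrak{F}$-maximal subgroup of $G$, and $G\in\mathfrak{F}$.
\end{enumerate}
Step 3 needs at least one $\mathfrak{F}$-maximal subgroup to exist. It does: $1\in\mathfrak{H}\subseteq\mathfrak{F}$ when $\mathfrak{H}$ is nonempty, and if $\mathfrak{H}$ is empty the claim is trivial.

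\emph{Sufficiency.} Assume $\mathfrak{F}=f_{\mathfrak{H}}(\mathfrak{F})$ and put $X=\mathrm{Int}_{\mathfrak{F}}(G)$ and $I=\mathcal{I}_{\mathfrak{H}}^{\mathfrak{F}}(G)$.

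To show $X\leq I$, I use the characterization in Theorem~\ref{thm1}(1).
\begin{enumerate}
\item Let $H$ be an $\mathfrak{H}$-subgroup of $G$. Since $\mathfrak{H}\subseteq\mathfrak{F}$, $H$ lies in some $\mathfrak{F}$-maximal subgroup $U$, and $X\leq U$.
\item Then $HX\leq U$, so $HX\in\mathfrak{F}$ by heredity.
\item Hence $H$ $\mathfrak{F}$-sn $HX$ by the first fact.
\item Since $X$ is normal, Theorem~\ref{thm1}(1) gives $X\leq I$.
\end{enumerate}

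To show $I\leq X$, fix an $\mathfrak{F}$-maximal subgroup $U$. The aim is to prove $UI\in f_{\mathfrak{H}}(\mathfrak{F})=\mathfrak{F}$, which forces $UI=U$ and so $I\leq U$. Let $K$ be an $\mathfrak{H}$-subgroup of $UI$.
\begin{enumerate}
\item Since $K$ is an $\mathfrak{H}$-subgroup of $G$, Theorem~\ref{thm1}(1) gives $K$ $\mathfrak{F}$-sn $KI$.
\item $UI/I\simeq U/(U\cap I)\in\mathfrak{F}$, so every subgroup of $UI/I$ is $\mathfrak{F}$-subnormal in it by the first fact. In particular $KI/I$ $\mathfrak{F}$-sn $UI/I$.
\item By Lemma~\ref{lem1}(1), $KI$ $\mathfrak{F}$-sn $UI$.
\item By Lemma~\ref{lem1}(3), combining steps 1 and 3, $K$ $\mathfrak{F}$-sn $UI$.
\end{enumerate}
Thus every $\mathfrak{H}$-subgroup of $UI$ is $\mathfrak{F}$-subnormal in $UI$, which is the required membership.

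The main point to get right is this last inclusion. It works because Theorem~\ref{thm1}(1) controls $\mathfrak{H}$-subgroups of the subgroup $UI$ through $I$, which is defined in $G$. It does not need $I\leq\mathcal{I}_{\mathfrak{H}}^{\mathfrak{F}}(UI)$, though that would also follow from Theorem~\ref{thm1}(4). Everything else is routine bookkeeping with Lemma~\ref{lem1}.
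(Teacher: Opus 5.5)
Your proof is correct and follows essentially the same route as the paper. Necessity goes through $\mathcal{I}_{\mathfrak{H}}^{\mathfrak{F}}(G)=G$ for $G\in f_{\mathfrak{H}}(\mathfrak{F})$, and sufficiency shows $UI\in f_{\mathfrak{H}}(\mathfrak{F})=\mathfrak{F}$ for every $\mathfrak{F}$-maximal $U$, with the reverse inclusion obtained from Theorem~\ref{thm1}(1). One small correction: the existence of an $\mathfrak{F}$-maximal subgroup in the necessity step rests on $1\in\mathfrak{F}$ whenever $\mathfrak{F}\neq\emptyset$, not on whether $\mathfrak{H}$ is empty; the fully degenerate case $\mathfrak{F}=\emptyset$ is one the paper also passes over silently.
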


\begin{proof}
Assume that $\mathfrak{F}=f_{\mathfrak{H}}(\mathfrak{F})$. Then $1\in \mathfrak{F}$. So $\mathfrak{F}\neq \emptyset$. Let $M$ be an $\mathfrak{F}$-maximal subgroup of $G$, $I=\mathcal{I}_{\mathfrak{H}}^{\mathfrak{F}}(G)$ and $H$ be an $\mathfrak{H}$-subgroup of $MI$. Then $H$ $\mathfrak{F}$-sn $HI$ by $(1)$ of Theorem~\ref{thm1}. Note that $MI/I\simeq M/(M\cap I) \in \mathfrak{F}$. Since $\mathfrak{F}$ is a hereditary formation, $HI/I$ $\mathfrak{F}$-sn $MI/I$. Hence $HI$ $\mathfrak{F}$-sn $MI$ by $(1)$ of Lemma~\ref{lem1}. Thus $H$ $\mathfrak{F}$-sn $MI$ for any $\mathfrak{H}$-subgroup $H$ of $MI$. Therefore $MI\in \mathfrak{F}$ by our assumption. Hence $MI=M$. So $I\subseteq \mathrm{Int}_{\mathfrak{F}}(G)$.

Let $H$ be an $\mathfrak{H}$-subgroup of $G$. Then from $\mathfrak{H}\subseteq \mathfrak{F}$ it follows that $H$ is contained in some $\mathfrak{F}$-maximal subgroup of $G$. So $H\mathrm{Int}_{\mathfrak{F}}(G) \in \mathfrak{F}$. Hence $H$ $\mathfrak{F}$-sn $H\mathrm{Int}_{\mathfrak{F}}(G)$ for any $\mathfrak{H}$-subgroup $H$ of $G$. Now $\mathrm{Int}_{\mathfrak{F}}(G)\subseteq I$ by $(1)$ of Theorem~\ref{thm1}. Thus $I=\mathrm{Int}_{\mathfrak{F}}(G)$.

Assume that $\mathcal{I}_{\mathfrak{H}}^{\mathfrak{F}}(G)=\mathrm{Int}_{\mathfrak{F}}(G)$ holds for every group $G$. Since $\mathfrak{F}$ is a hereditary formation, it is obvious that $\mathfrak{F}\subseteq f_{\mathfrak{H}}(\mathfrak{F})$. Let $G\in f_{\mathfrak{H}}(\mathfrak{F})$. Then $G=\mathcal{I}_{\mathfrak{H}}^{\mathfrak{F}}(G)$. Hence $G=\mathrm{Int}_{\mathfrak{F}}(G)$ by our assumption. It means that $G\in \mathfrak{F}$. Therefore $f_\mathfrak{H}(\mathfrak{F})\subseteq \mathfrak{F}$. Thus $\mathfrak{F}=f_{\mathfrak{H}}(\mathfrak{F})$.
\end{proof}

\section{Prooves of the main results}

\begin{proof}[Proof of the Theorem~\ref{theorem1}] Let $\mathfrak{F}_1$ be a class of groups, all of whose composition factors are from $(Z_2, Z_3, Z_5, A_5)$. It is clear that this class is a saturated formation and therefore it is solubly saturated.
Recall \cite[Chapter~1, Definition 3.1]{Guo2015book} that a group $G$ is called a quasisoluble if for every $\mathfrak{S}$-eccentric (i.e. non $\mathfrak{S}$-central) chief factor $H/K$ of $G$, every automorphism of $H/K$ induced by an element of $G$ is inner. Let $\mathfrak{F}_2$ be the class of all quasisoluble groups. Then $\mathfrak{F}_2$ is a solubly saturated formation by \cite[Chapter~1,~Corollary~3.8]{Guo2015book}.

Let $\mathfrak{F}=\mathfrak{F}_1\cap \mathfrak{F}_2$. Then it is solubly saturated as the intersection of solubly saturated formations. So    $G/\mathrm{Z}_{\mathfrak{S}}(G)$ is a direct product of groups isomorphic to $A_5$ for any $\mathfrak{F}$-group $G$ by \cite[Chapter 1, Theorem 3.12(2)]{Guo2015book}. Let $H$ be a subgroup of an $\mathfrak{F}$-group $G$. Then $H\mathrm{Z}_{\mathfrak{S}}(G)/\mathrm{Z}_{\mathfrak{S}}(G)$ is a subgroup of the direct product of groups isomorphic to $A_5$. Since all proper subgroups of $A_5$ and $A_5$ itself belong to $\mathfrak{F}$, the projection of $H\mathrm{Z}_{\mathfrak{S}}(G)/\mathrm{Z}_{\mathfrak{S}}(G)$ onto each of the factors of the direct product belongs to $\mathfrak{F}$. Note that $\mathfrak{F}$ is closed under subdirect products and the intersection of the kernels of the given projections is isomorphic to 1. Hence $H\mathrm{Z}_{\mathfrak{S}}(G)/\mathrm{Z}_{\mathfrak{S}}(G)\in \mathfrak{F}$. Note that $H\mathrm{Z}_{\mathfrak{S}}(G)/\mathrm{Z}_{\mathfrak{S}}(G)\simeq H/(\mathrm{Z}_{\mathfrak{S}}(G)\cap H)$ and $\mathrm{Z}_{\mathfrak{S}}(G)\cap H \subseteq \mathrm{Z}_{\mathfrak{S}}(H)$ by \cite[Chapter~1,~Theorem~2.7(a)]{Guo2015book}. Since the class of soluble $\{2,3,5\}$-groups is contained in $\mathfrak{F}$, the soluble hypercenter of $H$ lies in the $\mathfrak{F}$-hypercenter of $H$. So $H\in \mathfrak{F}$. Thus $\mathfrak{F}$ is a hereditary formation. 

According to the result of R. L. Griess and P. Schmid \cite[Appendix $\beta$, $\beta.9$]{Doerk1992} for $p=2$ and $G\simeq A_5$ there exists a Frattini $\mathbb{F}_2G$-module $A$ which is faithful for $G$. By the well-known theorem of W. Gasch$\mathrm{\ddot{u}}$ts \cite[Appendix $\beta$]{Doerk1992} there exists a Frattini extension $A\rightarrowtail R \twoheadrightarrow G$ such that $A\simeq \Phi(R)$ and $R/\Phi(R)\simeq G$. Note that   all maximal subgroups of $G$ are soluble and the class of soluble groups is closed under extensions. Hence all maximal subgroups of $R$ are soluble. Since $A$ is a faithful module, there exists a chief factor $H/K$ of $R$ such that $R/C_R(H/K)=G$ is not soluble. Now $H/K$ is an abelian noncentral chief factor of $R$. Hence  $R$ does not act by inner automorphisms on it. Therefore $R\not \in \mathfrak{F}$. So $\mathrm{Int}_{\mathfrak{F}}(R)=A$ but $R/A=G\in \mathfrak{F}$. Thus $\mathrm{Int}_{\mathfrak{F}}(R/\mathrm{Int}_{\mathfrak{F}}(R))\not \simeq 1$.
\end{proof}

\begin{proof}[Proof of the Theorem~\ref{theorem2}] Assume that $\mathrm{Int}_{\mathfrak{F}}(G/\mathrm{Int}_{\mathfrak{F}}(G))\simeq 1$ holds for any group $G$ and there exists a non-$\mathfrak{F}$-group all of whose $\mathfrak{F}$-subgroups are $\mathfrak{F}$-subnormal. Let $G$ be a minimal order group with this property. Since $\mathfrak{F}$ is a hereditary formation, all $\mathfrak{F}$-subgroups of every maximal subgroup of $G$ are $\mathfrak{F}$-subnormal in it by Lemma~\ref{lem11}. Hence $M\in \mathfrak{F}$ for every maximal subgroup $M$ of $G$ by our assumption. Therefore $\mathrm{Int}_{\mathfrak{F}}(G)=\Phi(G)$. Assume that $\Phi(G)\neq 1$. If $G/\Phi(G)\in \mathfrak{F}$, then $1\simeq \mathrm{Int}_{\mathfrak{F}}(G/\mathrm{Int}_{\mathfrak{F}}(G))=\mathrm{Int}_{\mathfrak{F}}(G/\Phi(G))=G/\Phi(G)$, a contradiction. Thus $G/\Phi(G)\not\in \mathfrak{F}$. Let $H/\Phi(G)$ be a proper $\mathfrak{F}$-subgroup of $G/\Phi(G)$. Then $H$ is a proper subgroup of $G$, i.e. $H\in \mathfrak{F}$. Thus $H/\Phi(G)$ $\mathfrak{F}$-sn $G/\Phi(G)$ by (2) of Lemma~\ref{lem1}. It means that $G/\Phi(G)\in \mathfrak{F}$ by our assumption, a contradiction. Thus, $\Phi(G)= 1$. Since $1$ $\mathfrak{F}$-sn $G$, we obtain that $G^{\mathfrak{F}}<G$. From $\Phi(G)= 1$ it follows that there is a maximal subgroup $M$ of $G$ with $G^{\mathfrak{F}}\nleq M$. Since $M$ is $\mathfrak{F}$-subnormal and maximal in $G$, it follows that $G^{\mathfrak{F}}\leq M$, the final contradiction.

Assume that $\mathfrak{F}$ contains every group $G$ all of whose $\mathfrak{F}$-subgroups are $\mathfrak{F}$-subnormal in $G$. Then $\mathrm{Int}_{\mathfrak{F}}(G/\mathrm{Int}_{\mathfrak{F}}(G))\simeq 1$ holds for every group $G$ by Theorems~\ref{thm1}~and~\ref{thm2}.
\end{proof}

\begin{lemma}\label{Stat2}
   Let $\mathfrak{H}$ be a  homomorph and $\mathfrak{F}=f_\mathfrak{H}(\mathfrak{F})$ be a hereditary formation. Then  $\mathfrak{F}$  is a hereditary   $Z$-saturated formation.
\end{lemma}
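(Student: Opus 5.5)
We have to show $\mathfrak{F}=Z\mathfrak{F}$. By Lemma~\ref{coroll2}, $Z\mathfrak{F}$ is then automatically a hereditary $Z$-saturated formation, which is the claim. Since $\mathfrak{F}$ is a formation, Theorem~\ref{BK} gives $G=\mathrm{Z}_{\mathfrak{F}}(G)$ for every $G\in\mathfrak{F}$, so $\mathfrak{F}\subseteq Z\mathfrak{F}$. The real work is the reverse inclusion: if $G=\mathrm{Z}_{\mathfrak{F}}(G)$, then $G\in\mathfrak{F}$. Because $\mathfrak{F}=f_\mathfrak{H}(\mathfrak{F})$, it suffices to show that every $\mathfrak{H}$-subgroup of such a $G$ is $\mathfrak{F}$-subnormal in $G$. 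It is in fact enough to prove that every subgroup $H$ of $G$ is $\mathfrak{F}$-subnormal.

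I would argue by induction on $|G|$, taking a counterexample $G\in Z\mathfrak{F}\setminus\mathfrak{F}$ of minimal order.

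\textbf{Step 1 (reduction to maximal subgroups).} Since $Z\mathfrak{F}$ is hereditary (Lemma~\ref{coroll2}), every proper subgroup of $G$ lies in $Z\mathfrak{F}$. By minimality, every proper subgroup lies in $\mathfrak{F}$. Now let $H$ be a proper subgroup, contained in a maximal subgroup $M$. Then $M\in\mathfrak{F}$, so $H$ is $\mathfrak{F}$-subnormal in $M$, since a hereditary formation makes every subgroup of an $\mathfrak{F}$-group $\mathfrak{F}$-subnormal. By (3) of Lemma~\ref{lem1}, it therefore suffices to show that each maximal subgroup $M$ satisfies $G/Core_G(M)\in\mathfrak{F}$.

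\textbf{Step 2 (primitive quotient).} Fix a maximal subgroup $M$ and put $X=G/Core_G(M)$. This is a primitive group, and it lies in $Z\mathfrak{F}$ because $Z\mathfrak{F}$ is a formation. Choose a minimal normal subgroup $R/Core_G(M)$ of $X$. Then $X=(R/Core_G(M))\,(M/Core_G(M))$, and $R/Core_G(M)$ is an $\mathfrak{F}$-central chief factor of $X$, so $R/Core_G(M)\rtimes X/C_X(R/Core_G(M))\in\mathfrak{F}$. There are two cases.
\begin{itemize}
\item If $R/Core_G(M)$ is abelian, then $C_X(R/Core_G(M))=R/Core_G(M)$ and $X\simeq R/Core_G(M)\rtimes X/C_X(R/Core_G(M))\in\mathfrak{F}$.
\item If $R/Core_G(M)$ is nonabelian, then $X$ is of type 2 or type 3. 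The centralizer is $1$ in type 2. In type 3 there are two minimal normal subgroups $R_1$ and $R_2$, with $C_X(R_1)=R_2$. In either case $X$ embeds into, or is a quotient of, the semidirect product $R/Core_G(M)\rtimes X/C_X(R/Core_G(M))$, which lies in $\mathfrak{F}$. Because $\mathfrak{F}$ is hereditary and a homomorph, $X\in\mathfrak{F}$.
\end{itemize}
Consequently $M$ is $\mathfrak{F}$-subnormal in $G$.

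\textbf{Step 3 (conclusion).} By Steps 1 and 2, every subgroup of $G$, in particular every $\mathfrak{H}$-subgroup, is $\mathfrak{F}$-subnormal in $G$. Hence $G\in f_\mathfrak{H}(\mathfrak{F})=\mathfrak{F}$, contradicting the choice of $G$. Thus $Z\mathfrak{F}\subseteq\mathfrak{F}$, so $\mathfrak{F}=Z\mathfrak{F}$, and Lemma~\ref{coroll2} finishes the proof.

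The main obstacle is the nonabelian case in Step 2. There one needs the precise embedding of a primitive group into the semidirect product of its socle factor with the group of automorphisms induced on it. In type 2, the conjugation action gives $X\hookrightarrow \mathrm{Aut}(R/Core_G(M))$ together with a diagonal-type embedding into $(R/Core_G(M))\rtimes X$. Concretely, the map $x\mapsto(x_R,x)$ embeds $X$ into $R\rtimes X$ when $X=R\,C$ with $C$ trivial; I expect a diagonal subgroup of $R\rtimes X$ isomorphic to $X$ to handle this. Type 3 reduces to the same argument after factoring out $R_2$.
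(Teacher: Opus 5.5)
Your argument is correct, but it is organized differently from the paper's.

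The paper works with the minimal counterexample $G\in Z\mathfrak{F}\setminus\mathfrak{F}$ itself. It shows $G$ is monolithic with socle $N$. It excludes nonabelian $N$, since then $G\simeq G/C_G(N)$ is a quotient of $N\rtimes G/C_G(N)\in\mathfrak{F}$. It excludes primitive $G$, since then $G\simeq N\rtimes G/C_G(N)$. Hence $N\leq\Phi(G)$, so $HN<G$ for every proper $\mathfrak{H}$-subgroup $H$. Then $HN\in\mathfrak{F}$ and $G/N\in\mathfrak{F}$ give $H$ $\mathfrak{F}$-sn $HN$ $\mathfrak{F}$-sn $G$.

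You instead run the same primitive-group analysis on every primitive quotient $X=G/\mathrm{Core}_G(M)$, which is legitimate because $Z\mathfrak{F}$ is a formation. This proves that every maximal subgroup of a $Z\mathfrak{F}$-group is $\mathfrak{F}$-normal. Your route removes the monolithic reduction, the Frattini step and the $HN$ trick. It also makes the induction unnecessary: since $Z\mathfrak{F}$ is subgroup-closed, each term of any maximal chain in a $Z\mathfrak{F}$-group is again in $Z\mathfrak{F}$. So every subgroup of a $Z\mathfrak{F}$-group is $\mathfrak{F}$-subnormal, independently of $\mathfrak{H}$.

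The cost is that you must treat primitive groups of type 3, which the paper never meets. The ``main obstacle'' you anticipate there is not real, and the diagonal map you sketch is not needed. Write $R$ for a minimal normal subgroup of $X$ and $\overline{M}=M/\mathrm{Core}_G(M)$.
\begin{itemize}
\item In type 2, $C_X(R)=1$, so $X$ is just the quotient of $R\rtimes X$ by its normal subgroup $R$.
\item In type 3, $R_1$ and $R_2=C_X(R_1)$ centralize each other. For $\{i,j\}=\{1,2\}$, the subgroup $R_i\cap\overline{M}$ is normalized by $\overline{M}$ and centralized by $R_j$, hence normal in $X=R_j\overline{M}$. Being core-free, it is trivial. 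Thus $\overline{M}\simeq X/C_X(R_1)$ complements $R_1$, and $X\simeq R_1\rtimes X/C_X(R_1)\in\mathfrak{F}$. Alternatively, $X/R_1,X/R_2\in\mathfrak{F}$ and $R_1\cap R_2=1$.
\end{itemize}
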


\begin{proof} Note that  $Z\mathfrak{F}$ is a hereditary $Z$-saturated formation by Lemma \ref{coroll2}.
Assume that   $\mathfrak{F}$ is not  $Z$-saturated. Hence $\mathfrak{F}\subset Z\mathfrak{F}$. 
Let   $G$ be a minimal order group from  $Z\mathfrak{F}\setminus\mathfrak{F}$. Then all proper subgroups and quotient groups of $G$ belong to   $\mathfrak{F}$. Since $\mathfrak{F}$ is a formation,   $G$  has the unique minimal normal subgroup $N$. If  $N$ is non-abelian, then  $G/C_G(N)\simeq G$. From $N\rtimes (G/C_G(N))\in\mathfrak{F}$ it follows that $G\in\mathfrak{F}$, a contradiction. So $N$ is abelian. If   $G$ is a primitive group, then $G\simeq N\rtimes G/C_G(N)\in\mathfrak{F}$ by \cite[Proposition 1.1.12]{BallesterBolinches2006}, a contradiction. Hence $G$ is not a primitive group. Therefore $N\leq \Phi(G)$.

If  $G\in\mathfrak{H}$, then $G\,\mathfrak{F}$-${\rm sn}\,G$. Let $H$ be a proper $\mathfrak{H}$-subgroup of  $G$. Then $HN<G$. Hence  $HN\in\mathfrak{F}$. So $H\,\mathfrak{F}$-${\rm sn}\,HN$. From $HN/N\simeq H/(H\cap N)\in\mathfrak{H}$ and $G/N\in\mathfrak{F}=f_{\mathfrak{H}}(\mathfrak{F})$ it follows that    $HN/N\,\mathfrak{F}$-${\rm sn}\,G/N$. So   $HN \,\mathfrak{F}$-${\rm sn}\,G $ by Lemma \ref{lem1}. Now $H \,\mathfrak{F}$-${\rm sn}\,G $ by Lemma \ref{lem1}. Thus $G\in f_{\mathfrak{H}}(\mathfrak{F})=\mathfrak{F}$, the final contradiction.
\end{proof}

\begin{proof}[Proof of the Theorem~\ref{theorem3}] Let $\mathfrak{F}=f_\mathfrak{F}(\mathfrak{F})$. 
Since $\mathfrak{F}$ is a homomorph, $\mathfrak{F}$ is a $Z$-saturated formation by Lemma \ref{Stat2}.

Let $\mathfrak{F}$ be a $Z$-saturated formation. It is clear that $ (1) \subseteq \mathfrak{F}\subseteq f_\mathfrak{F}(\mathfrak{F})$. Assume that $\mathfrak{F}\neq f_{\mathfrak{F}}(\mathfrak{F})$. Let $G$ be a minimal order group from $f_\mathfrak{F}(\mathfrak{F})\setminus \mathfrak{F}$. Then all maximal subgroups of $G$ belong to $\mathfrak{F}$ by Lemma~\ref{lem11} and our assumption. Since 1 $\mathfrak{F}$-sn $G$ and $\mathfrak{F}\subseteq \mathfrak{S}$, we see that $G\in \mathfrak{S}$. Note that if all subgroups of $G$ are $\mathfrak{F}$-subnormal, then all subgroups of $G/N$ are also $\mathfrak{F}$-subnormal by (2) of Lemma~\ref{lem1}. If $|N|>1$, then $G/N\in \mathfrak{F}$ by assumption. So $G$ has a unique minimal normal subgroup $N$ and $G/N\in \mathfrak{F}$. Thus $N=G^{\mathfrak{F}}$. Since $M$ $\mathfrak{F}$-sn $G$, we obtain that $N\leq M$ for every maximal subgroup $M$ of $G$. Thus $N\leq \Phi(G)$. Since $G$ is soluble, $\Phi(G)< \mathrm{F}(G)$ and $\mathrm{F}(G)\leq C_G(N)$ \cite[Chapter A, Theorem 10.6(b)]{Doerk1992}. Let $M$ be a maximal subgroup of $G$ such that $M\mathrm{F}(G)=G$. Then $G=MC_G(N)$. Hence  $N$ is a minimal normal subgroup of $M$. Since $\mathfrak{F}$ is a formation and $M\in \mathfrak{F}$, we have $N\rtimes G/C_G(N)\simeq N\rtimes M/C_G(N)\in \mathfrak{F}$ by Theorem~\ref{BK}. Thus $N$ is an $\mathfrak{F}$-central chief factor of $G$. Since $G/N\in \mathfrak{F}$ and  $\mathfrak{F}$ is $Z$-saturated, we see that $G\in \mathfrak{F}$, the final contradiction.
\end{proof}

\begin{proof}[Proof of the Theorem~\ref{theorem4}] If $\mathfrak{F}=f_{\mathfrak{H}}(\mathfrak{F})$, then $1\in \mathfrak{F}$. Note that $\mathrm{Int}_{\mathfrak{F}}(G)=\mathcal{I}_{\mathfrak{H}}^{\mathfrak{F}}(G)$ holds for every group $G$ by Theorem~\ref{thm2}. Now statements (1), (2), (3) of Theorem~\ref{theorem4} directly follow from statements (2), (3), (4) of Theorem~\ref{thm1}.
\end{proof}

\begin{lemma}\label{122} Let $\mathfrak{F}$ be a non-empty hereditary saturated formation. Then $\mathfrak{F}=f_\mathfrak{F}(\mathfrak{F})$.
\end{lemma}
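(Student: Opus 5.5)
We must show: if $\mathfrak{F}$ is a non-empty hereditary saturated formation, then every group $G$ all of whose $\mathfrak{F}$-subgroups are $\mathfrak{F}$-subnormal in $G$ lies in $\mathfrak{F}$.

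The inclusion $\mathfrak{F}\subseteq f_\mathfrak{F}(\mathfrak{F})$ is immediate: if $G\in\mathfrak{F}$, then every subgroup $H$ of $G$ is $\mathfrak{F}$-subnormal, since the hereditary property gives $H_i/Core_{H_i}(H_{i-1})\in\mathfrak{F}$ along any maximal chain. For the reverse inclusion, argue by contradiction. Let $G$ be a group of minimal order in $f_\mathfrak{F}(\mathfrak{F})\setminus\mathfrak{F}$.

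\textbf{Step 1: proper subgroups lie in $\mathfrak{F}$.} Let $M$ be a maximal subgroup of $G$ and $H$ an $\mathfrak{F}$-subgroup of $M$. Then $H$ $\mathfrak{F}$-sn $G$, so $H=H\cap M$ $\mathfrak{F}$-sn $M$ by Lemma~\ref{lem11}. Hence $M\in f_\mathfrak{F}(\mathfrak{F})$, and minimality gives $M\in\mathfrak{F}$. So every proper subgroup of $G$ is in $\mathfrak{F}$.

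\textbf{Step 2: proper quotients lie in $\mathfrak{F}$.} Let $1\neq N\unlhd G$ and let $K/N$ be an $\mathfrak{F}$-subgroup of $G/N$. If $K<G$, then $K\in\mathfrak{F}$ by Step 1, so $K$ $\mathfrak{F}$-sn $G$ and therefore $K/N$ $\mathfrak{F}$-sn $G/N$ by (2) of Lemma~\ref{lem1}. If $K=G$, the claim is trivial. Thus $G/N\in f_\mathfrak{F}(\mathfrak{F})$, and minimality gives $G/N\in\mathfrak{F}$.

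\textbf{Step 3: $G$ has a unique minimal normal subgroup, contained in $\Phi(G)$.} Since $\mathfrak{F}$ is a formation and $G\notin\mathfrak{F}$, Step 2 forces $G$ to have a unique minimal normal subgroup $N=G^{\mathfrak{F}}\neq 1$. Every maximal subgroup $M$ is an $\mathfrak{F}$-subgroup, hence $\mathfrak{F}$-sn $G$. Because $M$ is maximal, the only maximal chain from $M$ to $G$ is $M\subset G$, so $G/Core_G(M)\in\mathfrak{F}$. This gives $G^{\mathfrak{F}}\leq Core_G(M)\leq M$. As $M$ was arbitrary, $N\leq\Phi(G)$.

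\textbf{Step 4: contradiction via saturation.} Now $G/\Phi(G)$ is a quotient of $G/N\in\mathfrak{F}$, so $G/\Phi(G)\in\mathfrak{F}$. Since $\mathfrak{F}$ is saturated, $G\in\mathfrak{F}$, a contradiction.

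This is essentially the argument of Theorem~\ref{theorem2}'s first half, with saturation replacing the $\mathrm{Int}$ hypothesis. The only delicate point is Step 3, namely that an $\mathfrak{F}$-subnormal maximal subgroup $M$ contains $G^{\mathfrak{F}}$. This follows directly from the definition, because the unique maximal chain from $M$ to $G$ is $M\subset G$.
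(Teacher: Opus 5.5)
Your proof is correct and follows essentially the same route as the paper: a minimal counterexample $G$ has all proper subgroups and quotients in $\mathfrak{F}$, and every maximal subgroup is $\mathfrak{F}$-subnormal and hence contains $G^{\mathfrak{F}}$. Consequently $1\neq G^{\mathfrak{F}}\leq\Phi(G)$, and saturation yields $G\in\mathfrak{F}$; your extra identification of $G^{\mathfrak{F}}$ as the unique minimal normal subgroup is harmless but unnecessary, since $G^{\mathfrak{F}}\neq 1$ already makes $G/\Phi(G)$ a proper quotient lying in $\mathfrak{F}$.
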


\begin{proof}
Since $\mathfrak{F}$ is hereditary, it is clear that $\mathfrak{F}\subseteq f_\mathfrak{F}(\mathfrak{F})$. Assume that $f_\mathfrak{F}(\mathfrak{F})\setminus \mathfrak{F}\neq\emptyset$. Let $G$ be a minimal order group from $f_\mathfrak{F}(\mathfrak{F})\setminus \mathfrak{F}$. Hence all proper subgroups and quotient groups of $G$ belong to $\mathfrak{F}$  by Lemmas~\ref{lem1}~and~\ref{lem11}. Let $M$ be a maximal subgroup of $G$. Then $M\in \mathfrak{F}$ and $MG^\mathfrak{F}<G$. Hence $G^\mathfrak{F}\leq M$. Therefore $G^\mathfrak{F}\leq\Phi(G)$. From $G\not\in \mathfrak{F}$ it follows that $\Phi(G)\neq 1$. Now $G/\Phi(G)\in \mathfrak{F}$. Since $\mathfrak{F}$ is saturated, we see that $G\in \mathfrak{F}$, the contradiction. Thus $\mathfrak{F}=f_\mathfrak{F}(\mathfrak{F})$.
\end{proof}

\begin{proof}[Proof of the Corollary~\ref{coroll4}] $(a)$ Let $\mathfrak{H}=\mathfrak{F}\subseteq \mathfrak{F}$. Then $\mathfrak{F}=f_\mathfrak{F}(\mathfrak{F})$ by Lemma~\ref{122}. Therefore (a) of Corollary~\ref{coroll4} follows from Theorem~\ref{theorem4}. 

$(b)$ Let $\mathfrak{F}=v_{\pi}\mathfrak{F}$. Then $\mathfrak{F}=v_{\pi\cap \pi(\mathfrak{F})}\mathfrak{F}$ by \cite[Theorem A(2)]{M2014vF}. Let $\mathfrak{H}=(G\mid G$ is a cyclic primary $(\pi\cap \pi(\mathfrak{F}))$-group$)$. Since $\mathfrak{H}\subseteq \mathfrak{N}_{\pi\cap \pi(\mathfrak{F})}\subseteq \mathfrak{F}$ by \cite[Theorem A(1)]{M2014vF}, we have that the case $\mathfrak{F}=v_{\pi}\mathfrak{F}$ of Corollary~\ref{coroll4} follows from Theorem~\ref{theorem4}. 

Let $\mathfrak{F}=w_{\pi}\mathfrak{F}$. Then $\mathfrak{F}=w_{\pi\cap \pi(\mathfrak{F})}\mathfrak{F}$ by \cite[Theorem 3.1(4)]{VasVegera2016}. Let $\mathfrak{H}=(G\mid G$ is a primary $(\pi\cap \pi(\mathfrak{F}))$-group$)$. Since $\mathfrak{H}\subseteq \mathfrak{N}_{\pi\cap \pi(\mathfrak{F})}\subseteq \mathfrak{F}$ by \cite[Theorem 3.1(3)]{VasVegera2016}, we have that the case $\mathfrak{F}=w_{\pi}\mathfrak{F}$ of Corollary~\ref{coroll4} follows from Theorem~\ref{theorem4}.

$(c)$ Since $\mathfrak{F}=v\mathfrak{F}$ by \cite[Corollary 3.9]{Murashka2020}, (c) follows from (b).
\end{proof}

\begin{proof}[Proof of the Theorem~\ref{theorem5}] If $\mathfrak{F}=f_{\mathfrak{H}}(\mathfrak{F})$, then $1\in \mathfrak{F}$. Assume that $H/K\leq \mathrm{Int}_{\mathfrak{F}}(G/K)$. Note that $C_G(H/K)/K=C_{G/K}(H/K)$. If $$T/C_G(H/K)\simeq (T/K)/(C_G(H/K)/K)=(T/K)/C_{G/K}(H/K)\in \mathfrak{H},$$ then by Lemma~\ref{l7} there exists $L/K\in \mathfrak{H}$ such that $$(L/K)C_{G/K}(H/K)/C_{G/K}(H/K)\simeq (T/K)/C_{G/K}(H/K).$$ From $\mathfrak{H}\subseteq \mathfrak{F}$ it follows that there is an $\mathfrak{F}$-maximal subgroup $M/K$ of $G/K$ with $L/K\leq M/K$. Since $H\leq M$ and $\mathfrak{F}$ is hereditary, we see that $LH/K\in \mathfrak{F}$. Hence  $H/K\rtimes (LH/K)/C_{LH/K}(H/K)\in \mathfrak{F}$ by Theorem~\ref{BK}. Therefore $$H/K\rtimes (LH/K)/C_{LH/K}(H/K)=H/K\rtimes (LH/K)/(C_{G/K}(H/K)\cap (LH/K))\simeq$$ $$H/K\rtimes (LH/K)C_{G/K}(H/K)/C_{G/K}(H/K)\simeq$$ $$ H/K\rtimes (LHC_{G}(H/K)/K)/(C_{G}(H/K)/K)\simeq$$ $$H/K\rtimes LHC_{G}(H/K)/C_{G}(H/K) = H/K\rtimes TH/C_G(H/K)\in \mathfrak{F}.$$

Assume now $H/K\rtimes TH/C_G(H/K)\in \mathfrak{F}$ for any $\mathfrak{H}$-subgroup $T/C_G(H/K)$ of $G/C_G(H/K)$. Let $L/K$ be an $\mathfrak{H}$-subgroup of $G/K$. Then $$LC_G(H/K)/C_G(H/K)\simeq (L/K)C_{G/K}(H/K)/C_{G/K}(H/K)\in \mathfrak{H}.$$ So $H/K\rtimes LHC_G(H/K)/C_G(H/K)\in \mathfrak{F}$. Then $$H/K\rtimes LHC_G(H/K)/C_G(H/K)\simeq H/K\rtimes LH/C_{LH}(H/K)\in \mathfrak{F}.$$

Let $H_1/K_1$ be the chief factor of $LH/K$ below $H/K$. Since $\mathfrak{F}$ is a hereditary formation, we have $H_1/K\rtimes LH/C_{LH}(H/K)\in \mathfrak{F}$. Note that $K_1/K\unlhd H_1/K\rtimes LH/C_{LH}(H/K)$. So $H_1/K_1\rtimes LH/C_{LH}(H/K)\in \mathfrak{F}$. Note that $C_{LH}(H/K)\leq C_{LH}(H_1/K_1)$ and $C_{LH}(H_1/K_1)/C_{LH}(H/K)\unlhd H_1/K_1\rtimes LH/C_{LH}(H/K)$. So $$H_1/K_1\rtimes (LH/C_{LH}(H/K))/(C_{LH}(H_1/K_1)/C_{LH}(HK/K))\simeq$$ $$ H_1/K_1\rtimes LH/C_{LH}(H_1/K_1)\in \mathfrak{F}.$$ Thus all chief factors of $LH/K$ below $H/K$ are $\mathfrak{F}$-central. From $$(LH/K)/(H/K)\simeq LH/H\simeq L/(L\cap H)\in \mathfrak{H}\subseteq \mathfrak{F}$$ it follows that $LH/K\in \mathfrak{F}$ by Lemma \ref{Stat2}. So $L/K$ $\mathfrak{F}$-sn $LH/K$ for any $\mathfrak{H}$-subgroup $L/K$ of $G/K$. Hence $H/K\leq \mathcal{I}_{\mathfrak{H}}^{\mathfrak{F}}(G/K)= \mathrm{Int}_{\mathfrak{F}}(G/K)$ by Theorems~\ref{thm1}~and~\ref{thm2}.
\end{proof}

\begin{proof}[Proof of the Corollary~\ref{coroll8}] Let $\mathfrak{F}$ be a non-empty hereditary saturated formation and $f$ be its local definition (it exists by Gash$\mathrm{\ddot{u}}$ts -- Lubeseder -- Schmid theorem). Now $\mathfrak{F}=f_\mathfrak{F}(\mathfrak{F})$ by Lemma~\ref{122}. From Theorem~\ref{theorem5} it follows that we need only to prove that $H/K\rtimes TH/C_G(H/K)\in \mathfrak{F}$ iff $TH/C_G(H/K)\in \mathfrak{N}_pf(p)$ for every $p\in \pi(H/K)$ where $H/K$ is a chief factor of $G$ and $T/C_G(H/K)$ is an $\mathfrak{F}$-subgroup of $G/C_G(H/K)$. Let $C=C_G(H/K)$ and $\overline{T}=(H/K)\rtimes (HT/C)$.

Assume that $TH/C_G(H/K)\in \mathfrak{N}_pf(p)$ for all $p\in \pi(H/K)$. Suppose that $H/K$ is non-abelian. Since $\mathfrak{N}_pf(p)$ is a formation, $(TH/C)/C_{TH/C}(\overline{U})\in \mathfrak{N}_pf(p)$ for every chief factor $\overline{U}$ of $TH/C$ below $HC/C$. From $$(TH/C)/(HC/C)\simeq TH/HC\simeq T/(T\cap HC)=T/C(T\cap H)\in \mathfrak{F}$$ and the definition of a local formation it follows that $TH/C\in \mathfrak{F}$. 

Let $H^*=\{(h^{-1}K, hC)\mid h\in H\}$. It is an easy calculation to show that $H^*\unlhd (H/K)\rtimes H\overline{T}/C$, $H^*\cap (HT/C)\simeq 1$ and $H^*HT/C=(H/K)\rtimes (HT/C)=\overline{T}$. Now $\overline{T}/(H/K)\simeq \overline{T}/H^*\simeq HT/C\in \mathfrak{F}$. Hence $\overline{T}\in \mathfrak{F}$.

Assume that $H/K$ is abelian. Then $H\leq C$. So $HT/C=T/C\in f(p)$. From $H/K\leq C_{\overline{T}}(H/K)$ it follows that $\overline{T}/C_{\overline{T}}(\overline{U})\in f(p)\cap \mathfrak{F}$ for every chief factor $\overline{U}$ of $\overline{T}$ below $H/K$. It means that $\overline{T}\in \mathfrak{F}$. 

Suppose now that $(H/K)\rtimes (HT/C)\in \mathfrak{F}$. Since $\mathfrak{F}$ is a local formation, $\overline{T}/C_{\overline{T}}(H/K)\in \mathfrak{N}_pf(p)$ for all $p\in \pi(H/K)$ by \cite[Lemma 3.6]{M2023}. Note that $\overline{T}\leq \overline{G}=(H/K)\rtimes G/C$ is a primitive  group of type 1 or 3. If $H/K$ is abelian, then $H/K=C_{\overline{G}}(H/K)$. So $\overline{T}/C_{\overline{T}}(H/K)\simeq HT/C\in \mathfrak{N}_pf(p)$ for all $p\in \pi(H/K)$. If $H/K$ is non-abelian, then $G/C$ complements $C_{\overline{G}}(H/K)=\{(h^{-1}K, hC)\mid h\in H\}$. It means $C_{\overline{G}}(H/K)=C_{\overline{T}}(H/K)$ and $\overline{T}/C_{\overline{T}}(H/K)\simeq TH/C\in \mathfrak{N}_pf(p)$ for all $p\in\pi(H/K)$.
\end{proof}

\titleformat{\section}[block]{\normalfont}{\thesection}{0pt}{}
\renewcommand{\refname}{\hspace{6cm}\MakeUppercase{Bibliography}}

\end{document}